\newcommand{\N}{\mathbb{N}}
\newcommand{\T}{\mathbb{T}}
\newcommand{\Z}{\mathbb{Z}}
\newcommand{\Q}{\mathbb{Q}}
\newcommand{\R}{\mathbb{R}}
\newcommand{\C}{\mathbb{C}}
\newcommand{\A}{\mathcal{A}}
\newcommand{\grad}{\textup{grad}}
\newcommand{\Hil}{\mathbb{H}}
\newcommand{\Id}{\mathbb{1}}
\newcommand{\wt}[1]{\widetilde{#1}}
\newcommand{\Lie}{\mathcal{L}}
\newcommand{\vphi}{\varphi}
\newcommand{\hdd}[1]{\mathcal{#1}}
\newcommand{\niek}[1]{\mathscr{#1}}
\newcommand{\ratio}{\mathcal{R}}
\newcommand{\eps}{\epsilon}
\newtheorem{theorem}{Theorem}
\newtheorem{definition}[theorem]{Definition}
\newtheorem{lemma}[theorem]{Lemma}
\newtheorem{remark}[theorem]{Remark}
\newtheorem{proposition}[theorem]{Proposition}
\newtheorem{example}[theorem]{Example}
\newtheorem{counterexample}[theorem]{Counterexample}
\numberwithin{theorem}{section}
\title{Regularized polysymplectic geometry and first steps towards Floer theory for covariant field theories}
\author{Ronen Brilleslijper and Oliver Fabert}
\date{\today}
\begin{document}

\maketitle

\begin{abstract}It is the goal of this paper to present the first steps for defining the analogue of Hamiltonian Floer theory for covariant field theory, treating time and space relativistically. While there already exist a number of competing geometric frameworks for covariant field theory generalizing symplectic geometry, none of them are readily suitable for variational techniques such as Hamiltonian Floer theory, since the corresponding action functionals are too degenerate. Instead, we show how a regularization procedure introduced by Bridges leads to a new geometric framework for which we can show that the finite energy $L^2$-gradient lines of the corresponding action functional, called Floer curves, converge asymptotically to space-time periodic solutions.  As a concrete example we prove the existence of Floer curves, and hence also of space-time periodic solutions, for a class of coupled particle-field systems defined in this new framework.
\end{abstract}
\tableofcontents

\section{Introduction}
The world of classical physics essentially contains two major areas: classical mechanics and classical field theory (CFT). The rigorous mathematical formulation of the former led to the development of Hamiltonian dynamics and symplectic geometry. As in other fields of geometry, techniques from variational calculus can provide important tools for proving theorems. In variational calculus, solutions to equations are found by translating them into a minimization problem of some functional. In the 1980's Andreas Floer discovered that $L^2$-gradient lines of the so-called action functional satisfy a perturbed version of the Cauchy-Riemann equations (\cite{floer1988morse}). Thus, to study these gradient lines he could make use of the previously discovered theory of Gromov on pseudo-holomorphic curves. As these gradient lines converge to critical points of the action functional, this allowed him to prove the Arnold conjecture about the number of periodic solutions to Hamilton's equations for a large class of symplectic manifolds (see for example \cite{floer1988morse, audindamian}). The resulting theory, which was named Floer theory after him, brought a rich family of tools to symplectic geometers. Having all of this theory for classical mechanics makes one wonder if similar tools can be developed for CFT. At present there is no analogue of Floer theory in the setting of CFT. There are however several natural approaches to take when attempting to formulate such a theory. We will introduce some of them here. 

First of all, CFT can be seen as an infinite-dimensional version of classical mechanics and its equations can be translated to symplectic geometry on infinite-dimensional manifolds. Thus, one natural approach of defining a Floer theory would be to develop it on infinite-dimensional symplectic spaces. This approach is taken by \cite{paperoliverniek} and provides a working theory of which we will review the basics in \cref{sub:niekapproach}. As explained at the end of that section, the major disadvantage of this theory is that it is not symmetric in time and space. As CFT is capable of describing relativistic phenomena such as electromagnetism, it would be desirable to formulate a theory that is capable of dealing with coordinate transformations of the space-time manifold considered. Moreover, a theory that uses the symmetry of the underlying space instead of breaking it, is also from a mathematical point of view more natural to develop. Even so, the theory developed in \cite{paperoliverniek} is a very good step in the direction of formulating a Floer theory for CFT and allows us to prove some existence results for our theory as well (see \cref{sub:existence}). 

When we think of CFT in a covariant way (i.e. not splitting time and space), there are several methods of formulating the geometry behind it. Different articles use different methods, such as multisymplectic geometry (\cite{marsden1998multisymplectic}), polysymplectic geometry (\cite{gunther1987polysymplectic,mcclain2021global}) and others (\cite{royksymplectic,KRUPKOVA200293}). The mathematics world has not (yet) decided on one universally standard choice of language, but in this article we chose to stick with the polysymplectic formulation. This is mostly due to the inspiration we got from \cite{gunther1987polysymplectic} and \cite{mcclain2021global}, who both use polysymplectic geometry. As can be seen in the aforementioned articles by G\"unther and McClain, they made good progress in developing a geometry for CFT. However, we believe that it is impossible to build a Floer theory for their equations. In fact, the polysymplectic Hamiltonian equations are not suited to be studied by any kind of variational techniques. The reason for this, which will be explained more in \cref{sub:degeneracy}, is that the equations are too degenerate: the differential operator that lies at the core of the equations has an infinite-dimensional kernel which obstructs much of the needed analysis. In particular, some of the fundamental building blocks of Floer theory (see \cref{lem:GW}) can not be reproduced for G\"unther's or McClain's equations, which provides major obstacles for the analysis of the Floer equation. Fortunately, this obstacle can be overcome by enhancing these equations to a richer structure that solves this degeneracy problem. In \cref{sec:Bridges}, we will explain how an idea from \cite{bridgesTEA} enables us to augment the polysymplectic form of G\"unther\footnote{In a follow-up article we will deal with the global case described by McClain's equation.} and produce a form that is in a sense more natural and even allows us to connect the covariant picture with the infinite-dimensional approach. The main purpose of this article is to convey the message that the polysymplectic form coming from what we call \emph{Bridges regularization} provides the correct formalism to describe CFT covariantly in a way that allows for variational calculus. Apart from the fact that this form is better suitable for calculus of variations (in particular Floer theory) and allows for a connection with the infinite-dimensional approach of \cite{paperoliverniek}, it also arises more naturally from a geometric point of view as is explained at the end of \cref{sub:underlyingstr}.

The idea behind Bridges regularization is as follows. In classical mechanics, the position of a particle can be interpreted as a 0-form and its momentum as a 1-form. Since the only variable is the time-variable, there are no higher-degree forms to consider. When we move to field equations, the field values and momenta can still be interpreted as 0-forms and 1-forms respectively. However, there are now both time and space variables in the picture, meaning that there exist higher-degree forms as well. In the existing approaches for the covariant formulation of CFT only the field values and momenta are taken into account, whereas the higher-degree forms are ignored. This is the source of the degeneracy problem in these frameworks. In Bridges regularization these higher-degree forms are taken into the equation, which resolves the degeneracy. In particular, the components of the resulting polysymplectic form are themselves symplectic forms as opposed to G\"unthers polysymplectic form, whose components are merely closed 2-forms. 

For this article, we deliberately chose to work only with scalar theories on linear space and attack the problems hands-on before digressing about the underlying geometry. The reason for this is that we want the reader to be able to reach the main messages of this article before getting lost into abstract constructions and computations. However, the basic background of polysymplectic geometry on linear spaces that is avoided in \cref{sec:CMtoFT,sec:Bridges} is treated in \cref{sec:polysympl}. Also, we are in fact able to generalise much of the theory in this article to field theories on general manifolds taking values in non-trivial vector bundles. To include these generalisations into this article would make it too long and distract from the main message. Therefore, we decided to publish the global geometric picture in a subsequent article. The article at hand is meant to motivate the use of a regularized polysymplectic geometry over the existing frameworks and to prove some first results in this framework. The connection that this framework allows between the covariant and the infinite-dimensional picture will also enable us to indeed prove the existence of Floer curves and periodic solutions to certain field equations.


This article is structured as follows. \Cref{sec:CMtoFT} starts with a small recap of classical mechanics and how Floer theory comes into the picture. \Cref{lem:GW} gives two fundamental lemmas that are crucial for the validity of Floer theory. They will be used in other sections to test possible approaches to Floer theory for CFT. Next, field theories are introduced, starting with the infinite-dimensional approach of \cite{paperoliverniek} (\cref{sub:niekapproach}) and followed by the polysymplectic formulation (\cref{sub:relFT}). The section ends with an illustration of the degeneracy problem of the latter (\cref{sub:degeneracy}). This leads to the introduction of Bridges regularization in \cref{sec:Bridges} and we show that indeed the degeneracy problem is solved (\cref{sub:degsolved}). \Cref{sec:Bridges} starts very hands-on by manipulating the equations, but the underlying structure of these alterations is explained later on in \cref{sub:underlyingstr}. We show that this newly obtained structure also fits into the infinite-dimensional theory of \cite{paperoliverniek} in \cref{sub:infvp} and still provides the same set of solutions for the field equation that we started with (\cref{sub:spaceofsol}). In \cref{sec:polysympl}, we explain more rigorously how polysymplectic geometry works and give some background for constructions that were used more ad-hoc in the previous sections. In particular, it is explained how the action functionals of \cref{sec:CMtoFT,sec:Bridges} come about and why they have the correct critical points. Finally, \cref{sec:particlefield} illustrates the theory by applying it to the example of coupled particle-field systems. This provides an interesting application, both from a physical and mathematical point of view. Also, \cref{sub:existence} shows that indeed our version of the Floer equation allows us to prove the existence of solutions to field equations, which is the problem that we started with. Furthermore, \cref{sub:Lorentz} illustrates the advantage of our developed theory over \cite{paperoliverniek}.


\section{From classical mechanics to field theory}\label{sec:CMtoFT}

We start with a small recap of classical mechanics and the basic ideas of Floer theory. The most fundamental equation in classical mechanics is Newton's second law:
\begin{align}\label{eq:Fma}
    \frac{d^2}{dt^2}q(t) = -V_t'(q(t)).
\end{align}
Here, $q(t)$ describes the position at time $t$ of an object moving in some potential field given by $V_t$. We start by briefly discussing how this equation can be formulated with the help of a Hamiltonian and symplectic form and what the role of Floer theory is for finding solutions.

First of all, to transform this equation into a first order ODE, we define the \textit{momentum} $p(t)=q'(t)$. Now, \cref{eq:Fma} is equivalent to the system of equations
\begin{align*}
    \frac{d}{dt}q(t) = p(t) && \frac{d}{dt}p(t) = -V_t'(q).
\end{align*}
This system of equations can be unified into one equation using the \textit{Hamiltonian} function $H_t(q,p)=\frac{1}{2}p^2+V_t(q)$. It becomes
\begin{align}\label{eq:symplJdt}
  J  \frac{d}{dt}u(t) = \nabla H_t(u(t)),
\end{align}
where $u(t)=(q(t),p(t))$ and $J=\begin{pmatrix}0&-1\\1&0\end{pmatrix}$ is the standard complex structure on $\R^2$. As treated in standard textbooks on symplectic geometry (for example \cite[Chapter 18]{da2008lectures}), the geometric picture behind this equation lies in the two-form $\omega = \langle\cdot,J\cdot\rangle=dp\wedge dq$, called the \textit{standard symplectic form} on $\R^2$. It enables us to write the equation above as
\begin{align}\label{eq:sympl}
    (dH_t)_{u(t)}=\omega(\cdot,\frac{d}{dt}u(t)).
\end{align}

In Floer theory, one tries to prove the existence of periodic solutions of the equation above. First, the essential observation is made that solutions $u:S^1\to\R^2$ to \cref{eq:sympl} are critical points of the action functional
\begin{align*}
    \A_{H}(u) &= \int_{S^1}(u^*\lambda)_t-\int_0^1H_t(u(t))\,dt\\
    &=\int_0^1 \left(p(t)q'(t)-H_t(u(t))\right)\,dt.
\end{align*}
Here, $\lambda=p\,dq$ is a primitive for $\omega$ and $H_t=H_{t+1}$ is assumed to be periodic in $t$. Just like in Morse theory, Floer theory proves the existence of these critical points by studying gradient lines. If $\mathcal{X}=-\grad\,\A_H$ is defined to be the negative $L^2$-gradient of $\A_H$, then its smooth trajectories $\wt{u}:\R\times S^1\to\R^2$, called \textit{Floer curves}, are characterized by
\begin{align}\label{eq:symplFloer}
    \partial_s \wt{u}(s,t)+J\partial_t\wt{u}(s,t)-\nabla H_t(\wt{u}(s,t))=0.
\end{align}
\Cref{eq:symplFloer} is called the \textit{Floer equation} and forms the foundation of Floer theory. It is interesting to note, that when both $\wt{u}$ and $H$ are independent of $t$, the equation reduces to the Morse theory of the function $H$. An essential observation made by Floer is that \cref{eq:symplFloer} reduces to the Cauchy-Riemann equation when $H\equiv 0$. Its analysis builds upon the well-established theory of Gromov and Witten.\footnote{See \cite{mcduff2012j} for a detailed exposition of this theory.} We will give two important ingredients for the theory below. For a rigorous exposition of Floer theory we refer to \cite{audindamian}. 

There are two results in symplectic geometry that are crucial for the validity of Floer theory. Before restating them, we must define the \textit{energy} of a Floer curve:
\begin{align*}
    E(\wt{u}) = -\int_{-\infty}^{+\infty}\frac{d}{ds}\A_H(\wt{u}(s,\cdot))\,ds = \int_{-\infty}^{+\infty}\left(\int_{S^1}|\partial_s\wt{u}(s,t)|^2\,dt\right)\,ds
\end{align*}
From now on we will denote $\wt{u}_s:=\wt{u}(s,\cdot)$.

\begin{lemma}\label{lem:GW}
Let $\wt{u}$ be a Floer curve with finite energy: $E(\wt{u})<+\infty$.
\begin{enumerate}[label=(\roman*), ref={\thetheorem.\roman*}]
    \item\label{GW:zero} If $H\equiv 0$ and $\wt{u}_s$ has mean zero for every $s$ then
    \[\lim_{s\to\infty}\wt{u}_s=0\]
    in $C^\infty(S^1,\R^2)$.
    \item If all critical points of $\A_H$ are non-degenerate\footnote{Meaning that the Hessian of $\A_H$ is invertible.}, then there is some $u\in C^\infty(S^1,\R^2)$ satisfying \cref{eq:sympl} such that
    \[\lim_{s\to\infty}\wt{u}_s=u\]
    in $C^\infty(S^1,\R^2)$.\label{GW:ndg}
\end{enumerate}
\end{lemma}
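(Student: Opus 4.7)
The plan is to treat the two parts separately, both exploiting that the Floer equation is a perturbed Cauchy--Riemann equation on the cylinder $\R\times S^1$.

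For part (i), I would observe that with $H\equiv 0$ the Floer equation reduces to $\partial_s\wt u+J\partial_t\wt u=0$, which upon identifying $(\R^2,J)\cong\C$ is just the Cauchy--Riemann equation. Hence $\wt u$ is holomorphic on the cylinder and admits a Laurent expansion
\[\wt u(s,t)=\sum_{k\in\Z}c_k\,e^{2\pi k(s+it)}.\]
A short computation gives $\int_{S^1}\abs{\partial_s\wt u_s}^2\,dt=\sum_k(2\pi k)^2\abs{c_k}^2 e^{4\pi ks}$, and integrating this over $s\in\R$ shows that finite energy forces $c_k=0$ for every $k\neq 0$, since $\int_\R e^{4\pi k s}\,ds$ diverges for any such $k$. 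The mean-zero hypothesis then kills the remaining mode $c_0$, so in fact $\wt u\equiv 0$, which is strictly stronger than the claimed asymptotic statement.

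For part (ii), I would follow the classical Floer-theoretic strategy. Finite energy gives $\int_\R\norm{\partial_s\wt u_s}_{L^2(S^1)}^2\,ds<\infty$, so along some sequence $s_n\to\infty$ one has $\norm{\partial_s\wt u_{s_n}}_{L^2(S^1)}\to 0$. After establishing a priori $C^0$-bounds on $\wt u$ (via a maximum-principle argument, helped by the fact that the target is linear and aspherical so that no bubbling can occur), elliptic bootstrapping for the Floer equation yields a $C^\infty$-convergent subsequence of $\wt u_{s_n}$ to some $u_\infty\in C^\infty(S^1,\R^2)$. Passing to the limit in the Floer equation shows that $u_\infty$ satisfies \cref{eq:sympl}, i.e.\ it is a critical point of $\A_H$.

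The hard step, where the non-degeneracy hypothesis is essential, is upgrading this subsequential convergence to full $C^\infty$ convergence. I would linearise the Floer equation about $u_\infty$ to obtain an evolution of the form $\partial_s\xi=-A\xi+Q(\xi)$ with $A=J\partial_t-\text{Hess}\,H_t(u_\infty)$ a self-adjoint operator on $L^2(S^1)$ whose invertibility is exactly the non-degeneracy hypothesis. The resulting spectral gap around $0$ yields a hyperbolic splitting, from which one deduces exponential convergence $\wt u_s\to u_\infty$ either via a stable/unstable-manifold argument or via a Lojasiewicz--Simon inequality for $\A_H$. This is the main obstacle and constitutes the core content of the classical asymptotic convergence theorem in Floer theory.
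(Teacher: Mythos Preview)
The paper does not prove this lemma; immediately after the statement it simply refers to standard textbooks (Audin--Damian, McDuff--Salamon, Salamon's lectures). Your proposal therefore cannot conflict with any argument in the paper, and the only question is whether your outline is sound.

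Part (i) is clean and correct. The Laurent expansion on the cylinder is legitimate, and since each term in $\sum_k (2\pi k)^2|c_k|^2 e^{4\pi ks}$ is nonnegative, monotone convergence forces $c_k=0$ for all $k\neq 0$; the mean-zero hypothesis then kills $c_0$. Your observation that this actually yields $\wt u\equiv 0$, not merely $\wt u_s\to 0$, is right. This Fourier-mode argument is in fact the same strategy the paper itself adopts later when proving the analogous statement for the Bridges-regularized operator, so in spirit you are aligned with the paper even though it gives no proof here.

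Part (ii) is the standard textbook strategy and is the correct plan. One point worth flagging: the target is the non-compact $\R^2$, and a maximum principle by itself does not yield the a priori $C^0$ bound you invoke without some growth hypothesis on $H_t$ at infinity (e.g.\ the quadratic growth of the explicit $H_t=\tfrac12 p^2+V_t(q)$ the paper uses, or compactly supported $\nabla H_t$). The references the paper cites typically work on closed manifolds or under such hypotheses, so this is an implicit assumption in the lemma's statement rather than a gap in your outline.
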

For both parts of the lemma we refer to standard textbooks on holomorphic curves and Floer theory like \cite{audindamian,mcduff2012j,salamon1999lectures}. Lemma \ref{GW:ndg} implies that if there are Floer curves with finite energy, then automatically the existence of periodic solutions to \cref{eq:sympl} is established. Lemma \ref{GW:zero} plays an important role in proving the validity of Floer theory. As mentioned briefly above, Floer theory builds on Gromov's theory of pseudoholomorphic curves and lemma \ref{GW:zero} is needed for establishing so-called "bubbling off" results. If lemma \ref{GW:zero} fails to hold, the construction of a Floer theory is hopeless, thus we will use equivalents of lemma \ref{GW:zero} to check the validity of various generalisations of Floer theory in the subsequent text.

\subsection{Non-relativistic field theory}\label{sub:niekapproach}
Our guiding example for field theories will be the free wave equation
\begin{align}\label{eq:wave}
    -\partial_t^2\vphi(t,x)+\partial_x^2\vphi(t,x) = 0,
\end{align}
where $\vphi(t,x)$ denotes a scalar field on 2-dimensional space-time. Of course, the equation becomes interesting only when we add a non-linearity to the right-hand side. In particular we will be interested in non-linearities that come from a coupling of the field with a particle (see \cref{eq:particlefield}). This provides interesting non-linearities in the field theory, but also influences the mechanical system of the particle, thereby including symplectic geometry into the story. However, for the purpose of this chapter, the free wave equation already suffices. Just like in the previous paragraphs, we want to describe \cref{eq:wave} using some geometric framework and examine the existence of periodic solutions using an analogue of Floer theory. For our first attempt in doing so, we follow the exact same strategy as above. 

We start by defining the momentum $\pi=\partial_t \vphi$ in order to get rid of the second-order derivative in $t$. \Cref{eq:wave} becomes equivalent to  
\begin{align*}
    \partial_t\vphi(t,x) = \pi(t,x) && \partial_t\pi(t,x)=\partial_x^2\vphi(t,x).
\end{align*}
Just as in the case of classical mechanics discussed above, this system of equations can be unified into one equation using a Hamiltonian function. However, the Hamiltonian has to be defined on an infinite-dimensional space now. Let $\Hil=L^2(S^1,\R)\oplus L^2(S^1,\R)$ be the space of square-integrable functions in the space-variable. We denote elements of $\Hil$ by $\niek{Z}$, where $\niek{Z}(x)=(\vphi(x),\pi(x))$.  Note that we have taken the domain of these functions to be the circle instead of $\R$, since eventually we are interested in periodic solutions anyways. Define $\niek{H}:\Hil\to\R$ by 
\[
\niek{H}(\niek{Z})=\int_{S^1}\left[\frac{1}{2}\pi^2(x)+\frac{1}{2}\left(\frac{d}{dx}\vphi(x)\right)^2\right]\,dx.
\]
Then
\[
\grad\,\niek{H}(\niek{Z})=\begin{pmatrix}-\frac{d^2}{dx^2}\vphi\\ \pi\end{pmatrix},
\]
so \cref{eq:wave} becomes equivalent to
\begin{align}\label{eq:niekJdt}
\niek{J}\frac{d}{dt}\niek{Z}(t) = \grad\,\niek{H}(\niek{Z}(t)),
\end{align}
where now we write $\niek{Z}(t)(x)=\niek{Z}(t,x)$ and where $\niek{J}(\vphi,\pi)=(-\pi,\vphi)$ is the standard complex structure on $\Hil$.

Define $\omega_\Hil=\langle\cdot,\niek{J}\cdot\rangle_{L^2}=d\pi\wedge d\vphi$ to be the standard symplectic form on $\Hil$. Then finally we can write \cref{eq:wave} as 
\begin{align}\label{eq:poly}
    (d\niek{H})_{\niek{Z}(t)}=\omega_{\Hil}(\cdot,\frac{d}{dt}\niek{Z}(t)).
\end{align}
One can compare \cref{eq:poly} to \cref{eq:sympl} to see that the two equations are very similar. Whereas \cref{eq:sympl} brings the equation into the world of symplectic geometry, \cref{eq:poly} translates the wave equation into the realm of infinite-dimensional symplectic geometry. In a similar way as before, an action functional can be defined and periodic solutions of the free wave equation can be found by studying gradient trajectories of this functional. This approach has been taken by Fabert and Lamoree in \cite{paperoliverniek} and leads to an interesting and working theory. There is one major problem with this theory though: it breaks the symmetry between time and space. In the free wave equation time and space play a very similar role. In fact, every linear coordinate transformation in $O(1,1)$ preserves the equation. On the other hand, \cref{eq:poly} is highly asymmetric in time and space and a choice of coordinates must be made before applying the theory. This is undesirable, considering that the wave equation describes fundamentally relativistic phenomena such as electromagnetism and its interaction with charged particles. \Cref{sec:particlefield} will treat an example of the theory in which one cannot assume a fixed splitting of time and space. This example does not fit into the framework developed by Fabert and Lamoree and therefore illustrates the need to find a covariant formulation of the theory. However, also from a purely theoretical point of view, it is desirable to find a treatment of covariant field theories that uses their symmetry rather than breaking it. The remainder of this article will deal with finding a theory that realizes this. 

\subsection{Relativistic field theory}\label{sub:relFT}
In the previous paragraph, the first step in reformulating the wave equation was to introduce a momentum variable for the derivative with respect to time. As a start of our symmetric approach, a logical step would be to introduce momentum variables for both the time and space derivatives; $\pi_1(t,x)=\partial_t\vphi(t,x)$ and $\pi_2(t,x)=\partial_x\vphi(t,x)$. The free wave \cref{eq:wave} is then equivalent to the system of equations
\begin{alignat}{3}
    -&\partial_t\pi_1(t,x) &+\partial_x\pi_2(t,x) &= 0\nonumber\\
    &\partial_t\vphi(t,x) & &=\pi_1(t,x)\label{eq:hddsystem}\\
    & &-\partial_x\vphi(t,x)&=-\pi_2(t,x).\nonumber
\end{alignat}
The reason for the minus signs in the last row will become apparent below. Notice that the set of solutions to this system of equations is still in one-to-one correspondence with the set of solutions of \cref{eq:wave}. Just as before, we want to unify these equations using a type of Hamiltonian function. To this extent, define $\hdd{Z}=(\vphi,\pi_1,\pi_2)$ and $\hdd{S}: \R^3\to\R$ given by $\hdd{S}(\hdd{Z})= \frac{1}{2}\pi_1^2-\frac{1}{2}\pi_2^2$. Then the system of equations can be rewritten as
\begin{align}\label{eq:polyKdel}
    K_1\partial_t\hdd{Z}(t,x)+K_2\partial_x\hdd{Z}(t,x)=\nabla\hdd{S}(\hdd{Z}(t,x)),
\end{align}
where 
\begin{align*}
     K_1 = \begin{pmatrix}0&-1&0\\1&0&0\\0&0&0\end{pmatrix} && K_2 = \begin{pmatrix}0&0&1\\0&0&0\\-1&0&0\end{pmatrix}.
\end{align*}
Note that \cref{eq:polyKdel} is somewhat similar to \cref{eq:symplJdt,eq:niekJdt}, but differs in a lot of aspects as well. An important difference is that the matrices $K_1$ and $K_2$ are no longer non-degenerate and therefore do not define complex structures. However, both matrices are still antisymmetric\footnote{This is why we introduced the extra minus signs on the last line of \cref{eq:hddsystem}.} which means that $\omega^{K_i}:=\langle\cdot, K_i\cdot\rangle$ does define a 2-form for $i=1,2$. Using these 2-forms, we rewrite \cref{eq:polyKdel} as
\begin{align}\label{eq:dS}
    (d\hdd{S})_{\hdd{Z}(t,x)} = \omega^{K_1}(\cdot,\partial_t\hdd{Z}(t,x))+\omega^{K_2}(\cdot,\partial_x\hdd{Z}(t,x)).
\end{align}

As $\omega^{K_1}=d\pi_1\wedge d\vphi$ and $\omega^{K_2}=-d\pi_2\wedge d\vphi$ are both degenerate forms on $\R^3$, this equation lives outside the world of symplectic geometry. However, the equation above does fit in a different geometric picture, called \textit{polysymplectic geometry}. We will very briefly discuss here how this works and leave the more detailed explanation of polysymplectic geometry to \cref{sec:polysympl}. 

\subsubsection*{The polysymplectic form}
First of all, define $\Omega_0=\omega^{K_1}\otimes\partial_t+\omega^{K_2}\otimes\partial_x$. This is a closed, non-degenerate $\R^2$-valued 2-form on $\R^3$, where $\R^2=T_tS^1\times T_xS^1$ is spanned by $\partial_t$ and $\partial_x$. By definition, $\Omega_0$ is called a \textit{polysymplectic form}. Note that $(d\hdd{Z})_{(t,x)}$ defines a linear map $\R^2\to\R^3$. Thus, for any vector $V\in\R^3$ the expression $\Omega_0(V,(d\hdd{Z})_{(t,x)}(\cdot))$ defines a linear automorphism of $\R^2$ and it therefore makes sense to consider the trace of this map. We denote this trace by $\Omega_0^\sharp((d\hdd{Z})_{(t,x)})(V)$. With this notation, \cref{eq:dS} is equivalent to  
\begin{align}\label{eq:polyform}
    (d\hdd{S})_{\hdd{Z}(t,x)} = \Omega_0^\sharp((d\hdd{Z})_{(t,x)}).
\end{align}
For a slightly more elaborate dicussion of $\Omega_0^\sharp$ and an explanation of why \cref{eq:dS,eq:polyform} are equivalent, see \cref{sec:polysympl} and in particular \cref{ex:polyHamEqns}.

Reformulating the wave equation in a covariant way as done above is usually called the De Donder-Weyl formulation of covariant Hamiltonian field theories. It is the standard way to alter the non-relativistic approach from the previous paragraph to a theory that is invariant under linear coordinate transformations in $O(1,1)$. The polysymplectic formulation of this approach can be found in many articles, such as \cite{gunther1987polysymplectic,helein2001hamiltonian,kanatchikov1993canonical}.\footnote{Some of these articles use multisymplectic instead of polysymplectic geometry to write down these equations. However, the two theories are very similar.}

In order to define a Floer theory for this equation, we must first define the action functional $\A_{\hdd{S}}:C^\infty(\T^2,\R^3)\to\R$. Note again that we are interested in periodic solutions, so we view $\hdd{Z}$ as a map on $\T^2$. Let $dV=dt\wedge dx$ denote the volume form on $\T^2$ and define
\begin{align}\label{eq:actionHdd}
    \A_{\hdd{S}}(\hdd{Z})=\int_{\T^2}\left(\pi_1(t,x)\partial_t\vphi(t,x)-\pi_2(t,x)\partial_x\vphi(t,x)-\hdd{S}(\hdd{Z}(t,x))\right)dV.
\end{align}
In \cref{sec:polysympl} it will be explained how this action functional is derived from the polysymplectic form $\Omega_0$. For now it is only important to note that critical points of this functional coincide with periodic solutions of \cref{eq:hddsystem} (see \cref{lem:critpoints}). To find these critical points, we again study smooth trajectories of the negative $L^2$-gradient of $\A_{\hdd{S}}$. These trajectories, or \textit{Floer curves}, are given by the equation
\begin{align}\label{eq:Floerhdd}
    \partial_s\wt{\hdd{Z}}(s,t,x)+K_1\partial_t\wt{\hdd{Z}}(s,t,x)+K_2\partial_x\wt{\hdd{Z}}(s,t,x)-\nabla\hdd{S}(\wt{\hdd{Z}}(s,t,x))=0,
\end{align}
where $\wt{\hdd{Z}}:\R\times \T^2\to\R^3$. \Cref{eq:Floerhdd} is the Floer curve corresponding to the system (\ref{eq:hddsystem}). 

Note that when $\wt{\hdd{Z}}$ is independent of both $t$ and $x$, the equation above reduces to the Morse theory of $\hdd{S}$. One might expect the equation to reduce to the original Floer \cref{eq:symplFloer} when $\wt{\hdd{Z}}$ is independent of $x$, however this is not the case as $K_1$ does not define a complex structure (compare with \cref{eq:BridgesFloer}). Also, when $S\equiv 0$ we would hope that the equation would satisfy a generalization of Gromov-Witten theory. However, the operator $K_1\partial_t+K_2\partial_x$ is too degenerate for some analogue of this theory to hold true. The next section will elaborate on this observation. 

\subsection{Degeneracy problem for Floer curves}\label{sub:degeneracy}
As mentioned in the discussion of Floer theory for classical mechanics, an analogue of lemma \ref{GW:zero} has to hold true in order for there to be any hope that this Floer theory will work out. As we saw in lemma \ref{GW:zero}, when $H\equiv 0$, Gromov's theory of pseudoholomorphic curves can be used to control the limit of the Floer curve for $s$ going to infinity. However, in our current model we will see that for $S\equiv 0$ problems occur.

The energy of a solution to \cref{eq:Floerhdd} is given by
\begin{align*}
    E(\wt{\hdd{Z}})=-\int_{-\infty}^{+\infty}\frac{d}{ds}\A_{\hdd{S}}(\wt{\hdd{Z}}_s)\,ds=\int_{-\infty}^{+\infty}\left(\int_{\T^2}|\partial_s\wt{\hdd{Z}}(s,t,x)|^2\,dV\right)\,ds.
\end{align*}
Let $\wt{\hdd{Z}}$ be a Floer curve for $\hdd{S}\equiv 0$ of finite energy. Assume that the mean of $\wt{\hdd{Z}}_s$ is zero for any $s$. We have to check if it follows that $\lim_{s\to+\infty}\wt{\hdd{Z}}_s=0$. 
\begin{counterexample}
Let 
\begin{align*}
\wt{\hdd{Z}}(s,t,x) = \begin{pmatrix}0\\ \cos(x)\\ 0\end{pmatrix}.
\end{align*}
Then $\partial_s\wt{\hdd{Z}}\equiv\partial_t\wt{\hdd{Z}}\equiv 0$ and $\partial_x\wt{\hdd{Z}}\in\ker K_2$, so indeed $\wt{\hdd{Z}}$ is a solution to \cref{eq:Floerhdd} for $S\equiv 0$. Also, $E(\wt{\hdd{Z}})=0$ as $\partial_s\wt{\hdd{Z}}\equiv 0$. The mean of $\wt{\hdd{Z}}_s$ is $\int_{\T^2}\wt{\hdd{Z}}_s(t,x)\,dV=0$, so $\wt{\hdd{Z}}$ satisfies our assumptions. However, the limit for $s$ going to $+\infty$ is clearly not the zero-function. 
\end{counterexample}
The reason that lemma \ref{GW:zero} does not hold for \cref{eq:Floerhdd}, is the fact that the operator $K_\partial=K_1\partial_t+K_2\partial_x$ has an infinite-dimensional kernel. It is given by $\ker K_\partial=\{(\phi,\partial_x\psi,\partial_t\psi)\mid \phi\textup{ is constant, }\psi:\T^2\to\R\}$. On the contrary, the kernel of the operator $J\partial_t$ from classical mechanics consists only of constant functions. The covariant Floer equation described above is therefore too degenerate to provide us with a working theory. That means that the De Donder-Weyl formulation of the wave equation is not suited for Floer theory, which explains why no such theory has been developed as of yet. The degeneracy problem of the operator $K_\partial$ has been pointed out already in \cite{bridgesTEA}. The next section is devoted to solving this problem. By the end of the section we will have found a new Floer equation that in fact is suitable for defining a Floer theory.


\section{Bridges regularization}\label{sec:Bridges}
This section is devoted to solving the degeneracy problem from \cref{sub:degeneracy}. Before discussing the general structure of a solution to that problem, we start hands-on by manipulating the equations in (\ref{eq:hddsystem}). The idea for the presented solution comes from \cite{bridgesTEA}.

The fundamental problem with \cref{eq:hddsystem} is that the matrices $K_1$ and $K_2$ are degenerate. To make them invertible, we add a new variable and a new equation to the system:
\begin{align}\begin{split}\label{eq:systembr}
    -\partial_t\pi_1(t,x) +\partial_x\pi_2(t,x) &= 0\\
    \partial_t\vphi(t,x) \textcolor{blue}{-\partial_x o(t,x)} &=\pi_1(t,x)\\
    \textcolor{blue}{\partial_t o(t,x)} -\partial_x\vphi(t,x)&=-\pi_2(t,x)\\
    \textcolor{blue}{-\partial_t \pi_2(t,x) +\partial_x \pi_1(t,x)} &=\textcolor{blue}{0}.
\end{split}\end{align}
If $Z=(\vphi,\pi_1,\pi_2,o)$ and $S(Z)=\frac{1}{2}\pi_1^2+\frac{1}{2}\pi_2^2$, then \cref{eq:systembr} can be formulated as
\begin{align}\label{eq:MiBridges}
    M_1\partial_tZ(t,x)+M_2\partial_xZ(t,x)=\nabla S(Z(t,x)),
\end{align}
where 
\begin{align*}
    M_1 = \begin{pmatrix}0&-1&0&\textcolor{blue}0\\1&0&0&\textcolor{blue}0\\0&0&0&\textcolor{blue}1\\\textcolor{blue}0&\textcolor{blue}0&\textcolor{blue}{-1}&\textcolor{blue}0\end{pmatrix} && M_2 = \begin{pmatrix}0&0&1&\textcolor{blue}0\\0&0&0&\textcolor{blue}{-1}\\-1&0&0&\textcolor{blue}0\\\textcolor{blue}0&\textcolor{blue}1&\textcolor{blue}0&\textcolor{blue}0\end{pmatrix}.
\end{align*}
The last rows and columns are made blue to highlight the fact that the matrices $K_1$ and $K_2$ are in some way "contained" in these new matrices. Note that the additional terms in \cref{eq:systembr} are chosen in such a way that $M_1$ and $M_2$ are both anti-symmetric invertible matrices. Moreover, they both define complex structures on $\R^4$. Thus, we can define two symplectic forms on $\R^4$ by
\begin{align*}
    \omega^{M_1}&:=\langle\cdot,M_1\cdot\rangle = d\pi_1\wedge d\vphi \textcolor{blue}{+ d\pi_2\wedge do}\\
    \omega^{M_2}&:=\langle\cdot,M_2\cdot\rangle = -d\pi_2\wedge d\vphi \textcolor{blue}{- d\pi_1\wedge do}.
\end{align*}
Clearly $\Omega_B:=\omega^{M_1}\otimes\partial_t+\omega^{M_2}\otimes\partial_x$ defines a closed, non-degenerate $\R^2$-valued 2-form on $\R^4$ and thus is a polysymplectic form. Just like in \cref{eq:polyform}, we can reformulate (\ref{eq:MiBridges}) as
\begin{align}\label{eq:bridgeswave}
    (dS)_{Z(t,x)}=\Omega_B^\sharp((dZ)_{(t,x)}).
\end{align}
We will refer to this equation as the Bridges formulation of the wave equation. \Cref{eq:bridgeswave} looks very similar to the De Donder-Weyl formulation of the wave equation, but the polysymplectic form used here has much more structure, as its separate components consist of symplectic forms. The idea is that this extra structure will allow us to prove an analogue of lemma \ref{GW:zero} and even of lemma \ref{GW:ndg} for this equation. 

To find the Floer equation corresponding to \cref{eq:bridgeswave}, we have to define an action functional once again. Again, we are interested in periodic solution, so we let $Z:\T^2\to\R^4$ and define the action functional\footnote{We dropped all the arguments $(t,x)$ in the notation here in order not to make the formula too long.}
\begin{align}\label{eq:actionBridges}
    \A_S(Z) = \int_{\T^2}\left(\pi_1\partial_t\vphi-\pi_2\partial_x\vphi-o(\partial_t\pi_2-\partial_x\pi_1)-\hdd{S}(\hdd{Z})\right)dV
\end{align}
whose critical points are the solutions to \cref{eq:bridgeswave}. We refer to \cref{sec:polysympl} for the derivation of this action functional and the computation of its critical points. The corresponding Floer equation for this action can be found once again by studying its negative $L^2$-gradient lines. These trajectories satisfy the equation 
\begin{align}\label{eq:BridgesFloer}
    \partial_s \wt{Z}(s,t,x) + M_1\partial_t \wt{Z}(s,t,x) + M_2\partial_x \wt{Z}(s,t,x) - \nabla S(\wt{Z}(s,t,x)) = 0,
\end{align}
where $\wt{Z}:\R\times \T^2\to\R^4$ is a smooth map. 

This new Floer equation is a lot nicer than \cref{eq:Floerhdd} in multiple ways. Some of the reasons why will be discussed in the following sections. Note that just as for \cref{eq:Floerhdd}, the equation above reduces to Morse theory when $\wt{Z}$ is independent of both $t$ and $x$. However, unlike \cref{eq:Floerhdd}, when $\wt{Z}$ is just independent of $x$, the original Floer \cref{eq:symplFloer} is recovered. This is due to the fact that $M_1$ defines a complex structure on $\R^4$. Moreover, as $M_2$ defines a complex structure as well, the same statement holds when $\wt{Z}$ is only independent of $t$. Finally, when $S\equiv 0$ we no longer have the degeneracy problem mentioned in \cref{sub:degeneracy} and the discussion right before. The next section will show that indeed an analogue of lemma \ref{GW:zero} can be proven for \cref{eq:BridgesFloer}.

\subsection{Degeneracy problem for Floer curves - solved}\label{sub:degsolved}
From this section on, we must explicitly define the periodicity conditions that we are interested in. We will always assume the space-period to be $2\pi$ and the time-period to be some real number $T$. That is, we identify the 2-torus with $\T^2=(\R/T\Z)\times(\R/2\pi\Z)$. It must be mentioned that the degeneracy problem depends on the ratio $\ratio=\frac{2\pi}{T}$. Note, for example, when $T=2\pi$ that the free wave equation (\ref{eq:wave}) has an infinite-dimensional space of solutions containing $\vphi(t,x)=f(t+x)+g(t-x)$ for all $2\pi$-periodic functions $f$ and $g$. This type of degeneracy is different from the degeneracy problem of \cref{sub:degeneracy} in the sense that it is already a part of the free wave equation and does not depend on the geometric approach that we choose to take. Even in the infinite-dimensional symplectic approach taken in \cite{paperoliverniek} (see \cref{sub:niekapproach}) this type of degeneracy is excluded a priori. Thus, we restrict our attention to the case where $\ratio$ has \textit{irrationality measure} equal to 2.
\begin{definition}\label{def:irrmeasure}
The \emph{irrationality measure} of a real number $r$ is defined to be the infimum of all $\rho$ for which there exists some constant $c$ such that 
\begin{align*}
\frac{c}{q^\rho}<|r-\frac{p}{q}|
\end{align*}
for all $\frac{p}{q}\in\Q$.
\end{definition}

Theorem E.3 from \cite{diophantine} asserts that the set of real number with irrationality measure equal to 2 has full measure within the set of real numbers. Thus, the requirement on $\ratio$ stated above is not too restrictive. Note that in particular, our assumption implies that $\ratio$ is irrational. 

Now that we have settled the necessary condition on $\ratio$, we will prove an analogue of lemma \ref{GW:zero}. First of all, we define the \textit{energy} of a solution $\wt{Z}:\R\times \T^2\to\R^4$ of \cref{eq:BridgesFloer} as 
\begin{align*}
  E(\wt{Z}) &= -\int_{-\infty}^{+\infty}\frac{d}{ds}\A_S(\wt{Z}(s))\,ds\\
    &=\int_{-\infty}^{+\infty}\left(\int_{\T^2}\abs{\partial_s \wt{Z}}^2dV\right)ds.
\end{align*}

\begin{lemma}\label{lem:BridgesGWzero}
Let $\wt{Z}$ be a Floer curve of finite energy for $S\equiv 0$. Moreover, assume that $\wt{Z}_s$ has mean zero for every $s\in\R$. Then
\begin{align*}
\lim_{s\to\infty}\wt{Z}_s=0
\end{align*}
in $C^\infty(\T^2,\R^4)$.
\end{lemma}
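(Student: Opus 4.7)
The plan is to exploit the linearity of the Floer equation when $S\equiv 0$ and decompose $\wt{Z}$ into Fourier modes on $\T^2 = (\R/T\Z) \times (\R/2\pi\Z)$. Writing
\begin{align*}
\wt{Z}(s,t,x) = \sum_{(k,l) \in \Z^2} \hat{Z}_{k,l}(s)\, e^{i(k\ratio t + lx)}
\end{align*}
(with the reality constraint $\hat{Z}_{-k,-l} = \overline{\hat{Z}_{k,l}}$), each Fourier coefficient evolves according to the linear ODE
\begin{align*}
\partial_s \hat{Z}_{k,l}(s) = -iA_{k,l}\hat{Z}_{k,l}(s), \qquad A_{k,l} := k\ratio\, M_1 + l\, M_2.
\end{align*}
Since $\wt{Z}$ is smooth, these equations hold pointwise in $s$, and Parseval turns the finite-energy hypothesis into the statement that $\sum_{(k,l)}\int_\R |A_{k,l}\hat{Z}_{k,l}(s)|^2\,ds$ is finite.

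The key algebraic step is to compute the spectrum of $A_{k,l}$. A short calculation shows that $M_1$ and $M_2$ commute, and that $P := M_1 M_2$ is an orthogonal involution whose $\pm 1$-eigenspaces are each two-dimensional. Combined with $M_1^2 = M_2^2 = -I$, this gives
\begin{align*}
A_{k,l}^2 = -(k^2\ratio^2 + l^2)\, I + 2k\ratio\, l\, P,
\end{align*}
so on the $\pm 1$-eigenspaces of $P$ one reads off $A_{k,l}^2 = -(k\ratio \mp l)^2$. Hence the eigenvalues of $A_{k,l}$ are $\pm i(k\ratio - l)$ and $\pm i(k\ratio + l)$. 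Because $\ratio$ has irrationality measure $2$ (in particular is irrational), $k\ratio \pm l \neq 0$ for every $(k,l) \neq (0,0)$, so $A_{k,l}$ is invertible and $-iA_{k,l}$ is a Hermitian matrix with purely real, nonzero spectrum $\{\pm(k\ratio - l),\, \pm(k\ratio + l)\}$.

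From here the conclusion is immediate. Diagonalising $-iA_{k,l}$ in an orthonormal eigenbasis, the solution of the ODE for mode $(k,l)\neq(0,0)$ is a linear combination of pure exponentials $e^{\mu s}v$ with $\mu$ real and nonzero. But for any such $\mu\neq 0$, $\int_\R e^{2\mu s}\,ds = +\infty$, so the only mode whose $s$-derivative is $L^2$ on $\R$ is the zero mode: $\hat{Z}_{k,l}\equiv 0$ for every $(k,l)\neq(0,0)$. The mean-zero hypothesis kills the remaining constant mode, $\hat{Z}_{0,0}(s) = 0$ for every $s$. Hence $\wt{Z}\equiv 0$ identically on $\R\times\T^2$, and a fortiori $\wt{Z}_s \to 0$ in $C^\infty(\T^2,\R^4)$ as $s \to \infty$.

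The main obstacle --- and the point where Bridges regularization earns its keep --- is the invertibility of $A_{k,l}$ on every nonzero mode. For the unregularized De Donder--Weyl matrices $K_1, K_2$ the analogous operator has the infinite-dimensional kernel identified in \cref{sub:degeneracy}, which is precisely why the counterexample there survives. The enlargement of the target space by the variable $o$ promotes $M_1, M_2$ to genuine complex structures and makes $P = M_1M_2$ an honest involution, which together with the irrationality of $\ratio$ produces the spectral gap needed above. The only mild technicality beyond the spectral calculation is the legitimacy of interchanging the Fourier sum with the $s$-integral, which follows from smoothness of $\wt{Z}$ and positivity of the integrand via monotone convergence.
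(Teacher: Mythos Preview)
Your proof is correct, and the core approach --- Fourier decomposition on $\T^2$ followed by a spectral analysis of the Hermitian matrix $-iA_{k,l}$ --- is the same as the paper's. The algebraic computation of the eigenvalues via the commuting complex structures and the involution $P=M_1M_2$ is a pleasant variant of the paper's direct diagonalisation, but arrives at the same spectrum $\{\pm(k\ratio-l),\pm(k\ratio+l)\}$.

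Where your argument genuinely differs is in the endgame. The paper uses finite energy only to extract a sequence $s_j\to+\infty$ along which $\partial_s\hat Z_{s_j}\to 0$, which kills the coefficients of the \emph{negative} eigenvalues; it then has to run a careful $\epsilon/2$ tail-versus-bulk argument to show the remaining decaying exponentials converge to $0$ in every Sobolev norm. You instead use that finiteness of the \emph{two-sided} integral $\int_\R e^{2\mu s}\,ds$ fails for every $\mu\neq 0$, so finite energy forces all eigen-coefficients to vanish for every $(k,l)\neq(0,0)$, yielding the stronger conclusion $\wt Z\equiv 0$. This is shorter and cleaner for the problem as stated. What the paper's argument buys is robustness: it only looks at $s\to+\infty$, so it would apply verbatim to Floer half-cylinders defined on $[s_0,\infty)\times\T^2$ with finite forward energy, a setting where your divergence-at-$-\infty$ observation is unavailable.
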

\begin{proof}
We are going to prove the lemma by showing that the $L^2$-norms of all space-time derivatives of $\wt{Z}_s$ converge to 0. This will imply convergence in $C^\infty(\T^2,\R^4)$.
As $\wt{Z}_s$ is periodic in time and space we can define its Fourier transform as follows:
\[
\wt{Z}_s(t,x) = \sum_{m,k\in \Z}\hat{Z}_s(m,k)e^{\frac{2\pi i}{T}mt}e^{ikx}.
\]
Note that the assumption that the mean of $\wt{Z}_s$ is zero for all $s$, implies that $\hat{Z}_s(0,0)=0$. Thus, in what follows we may restrict our attention to $(m,k)\neq(0,0)$.
Applying equation (\ref{eq:BridgesFloer}) to the Fourier series gives
\begin{align}\label{eq:FourierFloer}
    0&=\partial_s\hat{Z}_s(m,k) + \frac{2\pi i}{T}mM_1\hat{Z}_s(m,k)+ikM_2\hat{Z}_s(m,k)\\
    &=\partial_s\hat{Z}_s(m,k) +A(m,k)\hat{Z}_s(m,k),\nonumber
\end{align}
where 
\begin{align*}
A(m,k)&=\frac{2\pi i}{T}mM_1+ikM_2\\
&=\begin{pmatrix}
0 &- \ratio im & ik & 0\\
\ratio im & 0 & 0 & -ik\\
-ik & 0 & 0 & \ratio im\\
0 & ik & -\ratio im & 0
\end{pmatrix}.
\end{align*}
As $A(m,k)$ is a Hermitian matrix for every $m$ and $k$, its eigenvalues are real. The eigenvalues $\{\lambda^1_+(m,k),\lambda^1_-(m,k),\lambda^2_+(m,k),\lambda^2_-(m,k)\}$ are given by 
\begin{align*}
\lambda^1_+(m,k) &=-\lambda^1_-(m,k) = \abs{k+m\ratio}>0\\
\lambda^2_+(m,k) &=-\lambda^2_-(m,k) = \abs{k-m\ratio}>0.
\end{align*}
Note that the strict inequalities above come from the assumption that $\ratio$ is irrational and $(m,k)\neq(0,0)$. 

\Cref{eq:FourierFloer} is a linear ODE and has solution
\begin{align*}
\hat{Z}_s(m,k) = e^{-A(m,k)s}\hat{Z}_0(m,k)
\end{align*}
for some initial condition $\hat{Z}_0(m,k)$. Thus,
\begin{align}\label{eq:dsZFourier}
\partial_s\hat{Z}_s(m,k) = -A(m,k)e^{-A(m,k)s}\hat{Z}_0(m,k). 
\end{align}
If $v^i_+(m,k)$ and $v^i_-(m,k)$ denote eigenvectors corresponding to $\lambda^i_+(m,k)$ and $\lambda^i_-(m,k)$ respectively, than we can write 
\[
\hat{Z}_0(m,k)=\sum_{i=1}^2 \left(\alpha^i_+(m,k)v^i_+(m,k)+\alpha^i_-(m,k)v^i_-(m,k)\right).
\]
Note that as $A(m,k)$ is Hermitian, the different eigenvectors are orthogonal. Also
\[E(\wt{Z})=\int_{-\infty}^{+\infty}\left(\int_{\T^2}\abs{\partial_s \wt{Z}_s(t,x)}^2dV\right)\,ds=\int_{-\infty}^{+\infty}\left(\sum_{m,k}\abs{\partial_s\hat{Z}_s(m,k)}^2\right)\,ds<+\infty,\]
which implies in particular that there is a sequence $s_j\to+\infty$ such that $\partial_s\hat{Z}_{s_j}(m,k)\to 0$. \Cref{eq:dsZFourier} shows that this is only possible when $\alpha^i_-(m,k)=0$ for $i=1,2$. We see that 
\begin{align}\label{eq:hatZformula}
\hat{Z}_s(m,k)=e^{-\lambda^1_+(m,k)s}\alpha^1_+(m,k)v^1_+(m,k)+e^{-\lambda^2_+(m,k)s}\alpha^2_+(m,k)v^2_+(m,k).
\end{align}

Fix $l_1,l_2\in\N$ and let $\epsilon>0$. We want to prove that for $s$ big enough it holds that $||\partial_t^{l_1}\partial_x^{l_2}\wt{Z}_s||_{L^2}^2<\epsilon$. Note first that the smoothness of $\wt{Z}_0$ implies that 
\begin{align*}
    ||\partial_t^{l_1}\partial_x^{l_2}\wt{Z}_0||_{L^2}^2 = \sum_{m,k}\sum_{i=1}^2 (\ratio m)^{2l_1}k^{2l_2}|\alpha^i_+(m,k)v^i_+(m,k)|^2 <+\infty.
\end{align*}
Thus, in particular we can find some $N$ such that 
\begin{align*}
    \sum_{|(m,k)|>N}\sum_{i=1}^2 (\ratio m)^{2l_1}k^{2l_2}|\alpha^i_+(m,k)v^i_+(m,k)|^2 <\frac{\epsilon}{2}.
\end{align*}
As $\lambda^i_+(m,k)>0$ for all $m,k$, this implies that for any $s>0$
\begin{align}\label{eq:epsi2}
    \sum_{|(m,k)|>N}\sum_{i=1}^2 (\ratio m)^{2l_1}k^{2l_2}e^{-2\lambda^i_+(m,k)s}|\alpha^i_+(m,k)v^i_+(m,k)|^2 <\frac{\epsilon}{2}.
\end{align}
Now, let $\lambda:=\min\{\lambda^i_+(m,k)\mid i=1,2\textup{ and }|(m,k)|\leq N\}$ and note $\lambda>0$. Then
\begin{align*}
    \sum_{|(m,k)|\leq N}\sum_{i=1}^2 (\ratio m)^{2l_1}k^{2l_2}e^{-2\lambda^i_+(m,k)s}|\alpha^i_+(m,k)v^i_+(m,k)|^2 \leq e^{-\lambda s}||\partial_t^{l_1}\partial_x^{l_2}\wt{Z}_0||_{L^2}^2.
\end{align*}
Thus, if we choose $s_0>\frac{1}{\lambda} \log(2||\partial_t^{l_1}\partial_x^{l_2}\wt{Z}_0||_{L^2}^2/\epsilon)$ (assuming $\epsilon$ is small enough for this to be positive), then for $s>s_0$ we get that 
\begin{align*}
    \sum_{|(m,k)|\leq N}\sum_{i=1}^2 (\ratio m)^{2l_1}k^{2l_2}e^{-2\lambda^i_+(m,k)s}|\alpha^i_+(m,k)v^i_+(m,k)|^2 < \frac{\epsilon}{2}.
\end{align*}
Combining this with \cref{eq:epsi2} it follows that for $s>s_0$
\begin{align*}
    ||\partial_t^{l_1}\partial_x^{l_2}\wt{Z}_s||_{L^2}^2 =  \sum_{m,k}\sum_{i=1}^2 (\ratio m)^{2l_1}k^{2l_2}e^{-2\lambda^i_+(m,k)s}|\alpha^i_+(m,k)v^i_+(m,k)|^2 < \epsilon.
\end{align*}
This proves the lemma
\end{proof}

This proof shows that \cref{eq:BridgesFloer} solves the degeneracy problem from \cref{sub:degeneracy}. In \cref{sec:particlefield} we will prove that even an analogue of lemma \ref{GW:ndg} can be proven for the new Floer equation.

\subsection{Underlying structure}\label{sub:underlyingstr}
At the start of \cref{sec:Bridges}, a new variable and equation were introduced in order to improve the Floer equation corresponding to De Donder-Weyl. However, even though it turns out that indeed we can get an improved Floer equation out of this, no explanation was given on where this new set of equations comes from. This section is devoted to explaining the structure behind \cref{eq:systembr}. The ideas from this section are based on \cite{bridgesTEA}.

First, we look back at our discussion of classical mechanics in \cref{sec:CMtoFT}. The fundamental operator here was $J\frac{d}{dt}$. This operator comes in naturally also from a geometric point of view, when we look at differential forms on $\R$. The most fundamental operator on differential forms is the exterior derivative $d$. For functions $q$ on $\R$ it is given by $dq=\frac{d}{dt}q\,dt$. Correspondingly, one can also define the \textit{codifferential} $\delta$ that takes a 1-form to a 0-form on $\R$. It is given by $\delta(p\,dt)=-\frac{d}{dt}p$ for functions $p$ on $\R$. Choosing the basis $\{1,dt\}$ for the total exterior algebra $\bigwedge(T^*\R)$ identifies it with $\R^2$. Under this identification the operator $J\frac{d}{dt}$ on functions $u:\R\to\R^2$ corresponds to the operator 
\begin{align}\label{eq:Jdel2}
\begin{pmatrix}
o&\delta\\d&0
\end{pmatrix}
\end{align}
on sections $q+p\,dt:\R\to \bigwedge(T^*\R)$.

Moving from classical mechanics to field theory, a space-variable is added to the system and the functions $\vphi$ are now defined on $\R^2$. We can try to follow the same idea as above to get an operator on differential forms on $\R^2$. To define the codifferential, first a metric has to be chosen on $\R^2$. Since the wave equation comes from the Laplace operator on Minkowski space, we put a Minkowski metric on $\R^2$. We choose the Minkowski metric of signature $(+-)$, meaning that $\langle\partial_t,\partial_t\rangle = -\langle\partial_x,\partial_x\rangle = 1$. We will denote $\R^2$ with this Minkowski metric by $\R^{1,1}$. The total exterior algebra bundle $\Omega(\R^{1,1}) = \Gamma(\bigwedge(T^*\R^{1,1}))$ of this space comes equipped with a codifferential $\delta_k$ that transforms $k$-forms into $(k-1)$-forms. It is defined by $\delta_k = (-1)^k\star^{-1}d\star$, where $\star$ denotes the Hodge star. The latter is the linear operator given by 
\begin{align*}
    \star 1 &= dV\\
    \star dt&= dx\\
    \star dx &= dt\\
    \star dV &= -1,
\end{align*}
A simple calculation shows
\begin{align*}
    \delta_1(\pi_1dt+\pi_2dx) &= -\partial_t \pi_1+\partial_x \pi_2\\
    \delta_2(o\,dV)&= -\partial_t odx -\partial_x odt,
\end{align*}
for functions $\pi_1$,$\pi_2$ and $o$. Now, analogously to the operator (\ref{eq:Jdel2}) defined above, Bridges defines an operator $J_\partial$ on $\Omega(\R^{1,1})$ by
\begin{align*}
    J_\partial = \begin{pmatrix}0&\delta_1&0\\d_0&0&\delta_2\\0&d_1&0\end{pmatrix}.
\end{align*}

The bundle $\Omega(\R^{1,1})$ can be trivialised by choosing the global frame $\{1,dt,dx,dV\}$. In this identification, $\Omega(\R^{1,1})$ can be seen as the space of maps $\R^{1,1}\to \R^{2,2}$, where we use the Lorentzian space\footnote{Sometimes we will just denote it by $\R^4$, when the focus does not lie on the metric.} $\R^{2,2}$ because the induced metric on the total exterior algebra is given by 
\begin{align*}
    \mathcal{L} = \begin{pmatrix}1&0&0&0\\0&1&0&0\\0&0&-1&0\\0&0&0&-1\end{pmatrix}.
\end{align*}
in this basis. We can also write out $J_\partial$ as 
\begin{align*}
    J_\partial = J_1\partial_t + J_2\partial_x,
\end{align*}
where 
\begin{align*}
    J_1 = \begin{pmatrix}0&-1&0&0\\1&0&0&0\\0&0&0&-1\\0&0&1&0\end{pmatrix} && J_2=\begin{pmatrix}0&0&1&0\\0&0&0&-1\\1&0&0&0\\0&-1&0&0\end{pmatrix}.
\end{align*}

Now, Bridges looks at Hamiltonians $S:\R^{2,2}\to \R$ and considers the equation:
\begin{align}\label{eq:Jdel}
    J_\partial Z(t,x) = \nabla_LS(Z(t,x)),
\end{align}
for $Z\in \Omega(\R^{1,1})$. Here, $\nabla_L=\mathcal{L}\nabla$ is the Lorentzian gradient. After multiplying both sides of the equation with the Lorentzian metric $\mathcal{L}$, equation (\ref{eq:Jdel}) can be written as
\begin{align*}
    M_1\partial_t Z(t,x) + M_2\partial_x Z(t,x) = \nabla S(Z(t,x)),
\end{align*}
where $\nabla$ is the Riemannian gradient. This is exactly \cref{eq:MiBridges}. Note that it is equivalent to \cref{eq:Jdel} as $\mathcal{L}$ is invertible. 

The exposition above shows that \cref{eq:MiBridges} arises naturally from a geometric point of view as a generalisation of \cref{eq:symplJdt}. Note that if we would have restricted the operator $J_\partial$ to the space of 0-forms and 1-forms, we would have recovered \cref{eq:polyKdel}. However, when we are working on a 2-dimensional space-time then excluding the 2-forms is somehow less logical than considering $J_\partial$ as an operator on the full space of differential forms. Even though in this article we focus primarily on 2-dimensional space-time, the construction of $J_\partial$ can easily be generalized to higher dimensions. See \cite{bridgesTEA} for the details. 

\subsection{The infinite-dimensional viewpoint}\label{sub:infvp}
As discussed above, \cref{eq:BridgesFloer} is in some way more natural to consider as a Floer equation than \cref{eq:Floerhdd}. In this section we will discuss another advantage of \cref{eq:BridgesFloer} over \cref{eq:Floerhdd}. That is that, after making a fixed splitting of time and space, \cref{eq:BridgesFloer} fits into the infinite-dimensional symplectic framework of \cite{paperoliverniek}. 

As discussed in \cref{sub:niekapproach}, \cite{paperoliverniek} analyse the wave equation by using infinite-dimensional symplectic geometry. To translate to their viewpoint, for the rest of this section we regard $Z$ as a map $S^1\to L^2(S^1,\R^4)$. By doing this we break the covariance and view $Z$ as a map from time to a space of functions of the space-variable. In order to get a symplectic equation, we first need a complex structure on $L^2(S^1,\R^4)$. Note that $M_1^2=-\Id$, so it provides us with this complex structure. As we know from \cref{eq:niekJdt}, the symplectic equation corresponding to some Hamiltonian $\niek{H}:L^2(S^1, \R^4)\to \R$, is
\begin{align}\label{eq:complexM}
M_1\frac{d}{dt}Z(t)=\grad\niek{H}(Z(t)).
\end{align}
In order to recover \cref{eq:MiBridges}, we define
\[
\niek{H}(Z) = -\frac{1}{2}\int_{S^1}\langle M_2\partial_x Z,Z\rangle\,dx+\int_{S^1} S(Z(x))\, dx.
\]
Then we see that $\grad\niek{H}(Z) = -M_2\partial_x Z+S\circ Z$, so that \cref{eq:complexM} becomes the same as \cref{eq:MiBridges}. Note in particular that the operator $M_2\partial_x$ that was used to define this Hamiltonian is self-adjoint with respect to the $L^2$-inner product. This ensures the existence of a complete eigenbasis with real eigenvalues, which is the assumption that \cite{paperoliverniek} starts with (see \cref{sub:existence}). 

We see that indeed the equations coming from the Bridges regularization can be put into the framework of \cite{paperoliverniek}. This was not possible for the original De Donder-Weyl equation, as neither $K_1$ nor $K_2$ define a complex structure. The upshot is that for certain choices of non-linearities, results from \cite{paperoliverniek} may be used to conclude the existence of Floer curves and thus the existence of periodic solutions of the wave equation. In \cref{sub:existence} we will encounter a natural class of non-linearities that become \emph{$\infty$-regularizing} (see \cref{def:reg}) when translated to the infinite-dimensional framework and thus fit into the work of Fabert and Lamoree. However, as we will see in \cref{sec:particlefield}, the covariant Floer equation is also more general in a certain way and we can introduce non-linearities that the framework of \cite{paperoliverniek} cannot deal with.

\subsection{The space of solutions}\label{sub:spaceofsol}
Before concluding this section, we want to see that the extra variable $o$ in \cref{eq:systembr} does not alter the space of solutions that we consider, in the sense that the space of solutions should still be in one-to-one correspondence with the solutions to the wave equation. It turns out that this is not entirely true, but every solution to the wave equation yields a one-dimensional space of solutions to \cref{eq:systembr}. This problem can easily be fixed. To make the discussion a little more general, we introduce a non-linearity to the equation now. That is, we consider the wave equation
\begin{align}\label{eq:nonlinwave}
    -\partial_t^2\vphi(t,x)+\partial_x^2\vphi(t,x) = f(\vphi(t,x)),
\end{align}
where, for now, $f$ can be any function. The system of equations corresponding to \cref{eq:systembr}, including this non-linearity is
\begin{align}\begin{split}\label{eq:systembrnonlin}
    -\partial_t\pi_1(t,x) +\partial_x\pi_2(t,x) &= f(\vphi(t,x))\\
    \partial_t\vphi(t,x) {-\partial_x o(t,x)} &=\pi_1(t,x)\\
    {\partial_t o(t,x)} -\partial_x\vphi(t,x)&=-\pi_2(t,x)\\
    {-\partial_t \pi_2(t,x) +\partial_x \pi_1(t,x)} &={0}.
\end{split}\end{align}
Clearly, when $Z=(\vphi,\pi_1,\pi_2,o)$ is a solution to \cref{eq:systembrnonlin}, then $\vphi$ solves the wave equation (\ref{eq:nonlinwave}). This can be seen by filling in $\pi_1$ and $\pi_2$ from the second and third equation into the first equation in (\ref{eq:systembrnonlin}). For the converse, note that filling in $\pi_1$ and $\pi_2$ into the last equation, yields that $o$ solves the free wave equation. We will use the following lemma. 
\begin{lemma}
The only periodic solutions of the free wave equation 
\begin{align*}
 {-\partial_t^2 o(t,x) +\partial_x^2 o(t,x)} &={0}
\end{align*}
are $o\equiv c$ for some constant $c\in\R$, when $\ratio=\frac{2\pi}{T}$ is irrational.
\end{lemma}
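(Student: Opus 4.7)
The plan is to use a Fourier series argument on the torus $\T^2 = (\R/T\Z)\times(\R/2\pi\Z)$, exactly analogous to the Fourier analysis carried out in the proof of \cref{lem:BridgesGWzero}. Since $o$ is (at least in the context where the lemma will be applied) a smooth periodic function, I can expand
\begin{align*}
o(t,x) = \sum_{m,k\in\Z} \hat{o}(m,k)\, e^{\frac{2\pi i}{T}mt}\, e^{ikx}.
\end{align*}
Applying the d'Alembertian $-\partial_t^2+\partial_x^2$ term by term then turns the wave equation into the condition
\begin{align*}
\sum_{m,k\in\Z}\left[\left(\tfrac{2\pi m}{T}\right)^2 - k^2\right]\hat{o}(m,k)\, e^{\frac{2\pi i}{T}mt}\, e^{ikx} = 0,
\end{align*}
so that by uniqueness of Fourier coefficients $\bigl[(m\ratio)^2 - k^2\bigr]\hat{o}(m,k)=0$ for every $(m,k)\in\Z^2$.

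The next step is to exploit the irrationality of $\ratio$. If $\hat{o}(m,k)\neq 0$ for some $(m,k)\neq(0,0)$, then $k^2 = m^2\ratio^2$, i.e.\ $\ratio = \pm k/m$ (with $m\neq 0$, since otherwise $k=0$ as well), contradicting $\ratio\notin\Q$. Hence $\hat{o}(m,k)=0$ for all $(m,k)\neq(0,0)$, leaving only the constant mode $\hat{o}(0,0)$, so $o\equiv c$ for some $c\in\R$.

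There is no real obstacle here; the only mild point to be careful about is regularity, since the Fourier manipulation above is cleanest for sufficiently smooth (say $C^2$, or distributional) $o$. In the setting of this paper all fields are taken to be smooth, so the argument goes through directly. Note also that the hypothesis actually used is only that $\ratio$ is irrational, which is weaker than the irrationality-measure-$2$ assumption in force elsewhere; this is consistent with the lemma's stated assumption.
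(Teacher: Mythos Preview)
Your proof is correct and follows essentially the same approach as the paper's own proof: expand $o$ in a Fourier series on $\T^2$, reduce the wave equation to $(\ratio^2 m^2 - k^2)\hat{o}(m,k)=0$, and use the irrationality of $\ratio$ to conclude that only the $(0,0)$ mode survives. Your treatment is slightly more detailed (explicitly handling the $m=0$ edge case and commenting on regularity), but the argument is the same.
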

\begin{proof}
Clearly, all constant maps $o\equiv c$ satisfy the wave equation. Conversely, if $o$ satisfies the free wave equation, then its Fourier transform $\hat{o}$ satisfies
\begin{align*}
\ratio^2m^2\hat{o}(m,k) -k^2 \hat{o}(m,k) = 0.
\end{align*}
As $\ratio^2m^2 - k^2$ is zero only when $m=k=0$, it must hold that $o(m,k)=0$ for $(m,k)\neq(0,0)$. Thus the only non-zero Fourier coefficient is $o(0,0)$, which proves that $o$ is constant. 
\end{proof}
Now, let $\vphi$ be any solution to \cref{eq:nonlinwave}. Then setting $\pi_1 = \partial_t\vphi$, $\pi_2=\partial_x\vphi$ and $o\equiv c$ yields a solution to \cref{eq:systembrnonlin}. Thus indeed, we find that every solution of the wave equation corresponds to a one-dimensional space of solutions to \cref{eq:systembrnonlin}. We can circumvent this problem by requiring $o$ to have mean zero. 

One might wonder what the actual difference between \cref{eq:hddsystem} and \cref{eq:systembr} is, now that we know that the extra variable $o$ is constant anyways. The difference however doesn't lie per se in the equation itself, but in the Floer equation it results in. To stress the difference between the two Floer equations we write them out once more in a different way. Let $\epsilon$ be a parameter and consider the set of equations
\begin{align*}
    \partial_s\wt\vphi(s,t,x)  -\partial_t\wt\pi_1(s,t,x) + \partial_x\wt\pi_2(s,t,x) &= 0\\
    \partial_s\wt\pi_1(s,t,x)  +\partial_t\wt\vphi(s,t,x) - \eps \partial_x \wt{o}(s,t,x) &=\wt\pi_1(s,t,x)\\
    \partial_s\wt\pi_2(s,t,x)  +\eps\partial_t \wt{o}(s,t,x) - \partial_x\wt\vphi(s,t,x) &=-\wt\pi_2(s,t,x)\\
    \eps\partial_s \wt{o}(s,t,x)  -\eps\partial_t \wt\pi_2(s,t,x) + \eps\partial_x \wt\pi_1(s,t,x) &= 0.
\end{align*}
For $\epsilon=0$ this gives the Floer \cref{eq:Floerhdd} for de De Donder-Weyl equation from \cref{sub:relFT}, while for $\epsilon=1$ it describes the Floer \cref{eq:BridgesFloer} corresponding to Bridges' equations. Even though the $o$ variable is constant for periodic solutions of \cref{eq:systembr}, it doesn't have to be for solutions of the Floer equation. As should hopefully be clear by now, the change from $\epsilon=0$ to $\epsilon=1$ in the equations above, precisely turns the Floer equation into a workable equation upon which we can build the theory.


\section{The polysymplectic formalism}\label{sec:polysympl}
In \cref{sec:CMtoFT,sec:Bridges} the language of polysymplectic geometry was briefly introduced and used to formulate symmetric approaches to field theory. This section will give the more rigorous background of the material and show exactly how the polysymplectic equations come about. As a reference for this section we refer to \cite{gunther1987polysymplectic}. 

\subsection{The polysymplectic Hamiltonian formalism}
Let $M$ be a vector space. Most of the definitions in this section apply to more general manifolds as well, but as mentioned in the introduction, the treatment of this will be postponed to a follow-up article.
\begin{definition}
Let $\Omega$ be an $\R^n$-valued form on $M$. If $\Omega(V,\cdot)=0$ implies $V=0$ for $V\in TM$, then $\Omega$ is called \emph{non-degenerate}.
\end{definition}
\begin{definition}\label{def:polyform}
An $\R^n$-valued 2-form $\Omega$ on $M$ is called \emph{polysymplectic} if it is closed and non-degenerate. The pair $(M,\Omega)$ is called a \emph{polysymplectic manifold}.
\end{definition}

We know that a symplectic form combined with a Hamiltonian function yields an equation, like the one in \cref{eq:sympl}. In a similar way we want to combine polysymplectic forms with Hamiltonians. We already saw two examples in \cref{eq:polyform,eq:bridgeswave}. Here, the general construction will be explained. 

Let $X: \R^n\to TM$ denote any linear map and $V\in TM$. Then the map $\nu\mapsto \Omega(V,X(\nu))$ is a linear map from $\R^n$ to itself of which we can take the trace. Thus we get a map $TM\to \R$ given by $V\mapsto \textup{tr }\Omega(V,X(\cdot))$ and denote this map by $\Omega^\sharp(X)$. Thus $\Omega^\sharp:\textup{Hom}(\R^n,TM)\to T^*M$ is given by
\[
\Omega^\sharp(X)(V)=\textup{tr}\left(\nu\mapsto\Omega(V,X(\nu))\right).
\]

Now for any map $Z:\R^n\to M$ it holds that $(dZ)_{\vec{x}}$ is a linear map $\R^n\to TM$ for $\vec{x}\in\R^n$. Given a Hamiltonian\footnote{Note that polysymplectic Hamiltonians are denoted by $S$ in this article, whereas symplectic Hamiltonians are denoted $H$.} $S:M\to \R$ we get the Hamiltonian equation
\begin{align}\label{eq:generalpoly}
    (dS)_{Z(\vec{x})} = \Omega^\sharp((dZ)_{\vec{x}})
\end{align}
for any $\vec{x}\in \R^n$

\begin{example}
First of all, let's take $n=1$, $M=\R^2$ and $\Omega=\omega=dp\wedge dq$ the standard symplectic form on $\R^2$. For a curve $u:\R\to\R^2$ we get that $\omega^\sharp((du)_t)(V)=\textup{tr }\omega(V,(du)_t(\cdot))=\omega(V,\frac{d}{dt}u(t))$. Thus, \cref{eq:generalpoly} reduces to \cref{eq:sympl} in this case. It follows that the polysymplectic formalism indeed extends the symplectic formalism. 
\end{example}
\begin{example}\label{ex:polyHamEqns}
Now let $n=2$, $M=\R^d$ and choose 2-forms $\omega_1$ and $\omega_2$ on $\R^d$ such that $\Omega=\omega_1\otimes\partial_t+\omega_2\otimes\partial_x$ is a polysymplectic form. For $Z:\R^2\to\R^d$  the map $\nu\mapsto\Omega(V,(dZ)_{(t,x)}(\nu))$ is given by
\begin{align*}
\begin{pmatrix}\nu_t\\\nu_x\end{pmatrix}\mapsto\begin{pmatrix}\omega_1(V,\partial_tZ(t,x))&\omega_1(V,\partial_xZ(t,x))\\\omega_2(V,\partial_tZ(t,x))&\omega_2(V,\partial_xZ(t,x))\end{pmatrix}\begin{pmatrix}\nu_t\\\nu_x\end{pmatrix}
\end{align*}
for $\nu=(\nu_t,\nu_x)\in\R^2$.
This map has trace $\omega_1(V,\partial_tZ(t,x))+\omega_2(V,\partial_xZ(t,x))$. Thus,
\[\Omega^\sharp((dZ)_{(t,x)})=\omega_1(\cdot,\partial_tZ(t,x))+\omega_2(\cdot,\partial_xZ(t,x)).\]
If we put $d=3$ and $\omega_i=\omega^{K_i}$, then \cref{eq:generalpoly} becomes \cref{eq:polyform} and the above computation shows that this is indeed equivalent to \cref{eq:dS}. For $d=4$ and $\omega_i=\omega^{M_i}$ \cref{eq:bridgeswave} is recovered. 
\end{example}
 \begin{remark}
 Note that \cref{def:polyform} of a polysymplectic form states that the form $\Omega$ has to be non-degenerate. When $\Omega$ can be written as $\Omega=\sum\omega_i\otimes\partial_{x_i}$ this does not however imply that the separate forms $\omega_i$ have to be non-degenerate. Indeed, this is exactly where the difference between $\Omega_0$ from \cref{sub:relFT} and $\Omega_B$ from \cref{sec:Bridges} lies. The components $\omega^{K_i}$ of $\Omega_0$ are degenerate forms on $\R^3$, yet $\Omega_0$ itself is non-degenerate. On the other hand, the components $\omega^{M_i}$ of $\Omega_B$ are themselves already non-degenerate forms on $\R^4$, from which it follows that $\Omega_B$ is non-degenerate as well. 
 \end{remark}
 
 \subsection{From polysymplectic forms to action functionals}\label{sub:polyactions}
 Now that we have established how polysymplectic forms combine with Hamiltonians to yield field equations, we want to construct the action functional that will allow us to construct a Floer theory. Thus, given a polysymplectic form $\Omega$ on $M$ with values in $\R^n$ and a Hamiltonian $S:M\to\R$, we want to construct a functional $\A_S^\Omega:C^\infty(\T^n,M)\to\R$, such that the critical points of this functional coincide with the solutions to \cref{eq:generalpoly}. In symplectic geometry the action is defined by pulling back the symplectic form along a periodic map and integrating over the circle. As we are working with $\R^n$-valued forms we cannot simply integrate the pullback. First, we must construct an ordinary differential form out of the polysymplectic form. 
 
 Since we are interested in periodic solutions to \cref{eq:generalpoly}, we will actually replace $\R^n$ by $\T^n$ and consider $\Omega$ as a form with values in $T\T^n$ instead of $\R^n$. By contracting with the volume form $dV=dx_1\wedge\cdots\wedge dx_n$ on $\T^n$ it can also be viewed as  a $\bigwedge^{n-1}(T^*\T^n)$-valued form. As $T\T^n$ is a trivial bundle, we can write $\Omega=\sum_i \omega_i\otimes \partial_i$, for closed 2-forms $\omega_i$ on $M$ and where $\partial_i:=\partial_{x_i}$. After contraction with $dV$ it becomes $\sum_i(-1)^{i+1}\omega_i\otimes dx_1\wedge\cdots\wedge\widehat{dx_i}\wedge\cdots\wedge dx_n$. Define $\wt{\Omega}:=\sum_i (-1)^{i+1}\omega_i\wedge dx_1\wedge\cdots\wedge\widehat{dx_i}\wedge\cdots\wedge dx_n$ as an $(n+1)$-form on $\T^n\times M$.\footnote{This interchanging of tensor products and wedge products seems ad hoc, but comes from a bigger geometric picture. In the language of \cite{multipoly_forgergomes}, $\Omega$ is the \emph{symbol} of the horizontal form $\wt{\Omega}$. As we are dealing with linear spaces, the construction boils down to the simple change of tensor products and wedge products. We refer to \cite{multipoly_forgergomes} for the general construction.} 
 
 As $M$ is a vector space, all the forms $\omega_i$ are exact and we can write $\omega_i=d\theta_i$. This implies that $\wt{\Omega}=d\wt{\Theta}$ for $\wt{\Theta}=\sum_i (-1)^{i+1}\theta_i\wedge dx_1\wedge\cdots\wedge\widehat{dx_i}\wedge\cdots\wedge dx_n$. Note that every map $Z:\T^n\to M$ can be seen as a section $Z:\T^n\to \T^n\times M$ of the trivial bundle over $\T^n$. Thus, it holds that $Z^*\wt{\Theta}$ is a well-defined $n$-form on $\T^n$. Therefore, we can define the action functional $\A_S^\Omega:C^\infty(\T^n,M)\to\R$ by  
 \begin{align*}
     \A_S^\Omega(Z) = \int_{\T^n}Z^*\wt{\Theta}-\int_{\T^n}S(Z(t,x))\,dV
 \end{align*}
\begin{remark}
Note that there can be multiple choices of primitives $\theta_i$ leading to different action functionals. However, if $\wt{\Theta}$ and $\wt{\Psi}$ are two choices of primitives $\wt{\Omega}=d\wt{\Theta}=d\wt{\Psi}$, then $d(\wt{\Theta}-\wt{\Psi})=0$ so that $Z^*(\wt{\Theta}-\wt{\Psi})\in H^n_{dR}(\T^n)$. Thus $Z^*(\wt{\Theta}-\wt{\Psi})=const\cdot dV$, meaning that $\A_S^\Omega$ just shifts by a constant. As we are only interested in relative values of $\A_S^\Omega$ this does not matter.
\end{remark}
\begin{example}
If we take $n=2$ and $\Omega$ to be $\Omega_0$ from \cref{sub:relFT}, we get $\wt{\Omega}_0=d\pi_1\wedge d\vphi\wedge dx+d\pi_2\wedge d\vphi\wedge dt$ and $\wt{\Theta}_0=\pi_1 d\vphi\wedge dx+\pi_2 d\vphi\wedge dt$. Thus, $Z^*\wt{\Theta}_0=(\pi_1\partial_t\vphi-\pi_2\partial_x\vphi)dV$, so that indeed the action functional $\A_S^\Omega$ defined above coincides with \cref{eq:actionHdd}. A similar computation shows that for $\Omega=\Omega_B$ the action functional from \cref{eq:actionBridges} is recovered.
\end{example}

 \begin{lemma}\label{lem:critpoints}
 The critical points of $\A_S^\Omega$ coincide with the solutions of \cref{eq:generalpoly}.
 \end{lemma}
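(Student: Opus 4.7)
The strategy is to compute the first variation of $\A_S^\Omega$ at $Z$ in an arbitrary direction $V \in C^\infty(\T^n, M)$ and read off the Euler--Lagrange equation via the fundamental lemma of the calculus of variations. Since $M$ is a vector space, the variation of the Hamiltonian term is immediate:
\[
\frac{d}{d\tau}\bigg|_{\tau=0}\int_{\T^n} S(Z+\tau V)\,dV = \int_{\T^n}(dS)_{Z(\vec{x})}(V(\vec{x}))\,dV.
\]
The rest of the argument is devoted to showing that the variation of the kinetic term $\int_{\T^n} Z^*\wt{\Theta}$ equals $\int_{\T^n} \Omega^\sharp((dZ)_{\vec{x}})(V)\,dV$, after which the lemma follows at once.

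For the kinetic piece I would lift $V$ to the vertical vector field $\hat V$ on $\T^n \times M$ defined by $\hat V(\vec x, m) = V(\vec x) \in T_m M$; its flow $(\vec x, m) \mapsto (\vec x, m + \tau V(\vec x))$ sends the section $Z$ to $Z + \tau V$. Combining the pullback $Z^*$ with Cartan's magic formula and the identity $d\wt{\Theta} = \wt{\Omega}$ yields
\[
\frac{d}{d\tau}\bigg|_{\tau=0}(Z+\tau V)^*\wt{\Theta} = Z^*(\Lie_{\hat V}\wt{\Theta}) = d\bigl[Z^*(\iota_{\hat V}\wt{\Theta})\bigr] + Z^*(\iota_{\hat V}\wt{\Omega}).
\]
Stokes' theorem on the closed manifold $\T^n$ kills the first summand after integration. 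For the second, I would plug in $\wt{\Omega} = \sum_i (-1)^{i+1}\omega_i \wedge dx_1 \wedge \cdots \widehat{dx_i} \cdots \wedge dx_n$; since $\hat V$ is vertical it only contracts with the $\omega_i$-factors, producing the 1-form $Z^*(\iota_{\hat V}\omega_i) = \sum_j \omega_i(V, \partial_j Z)\,dx_j$. Wedging with $dx_1 \wedge \cdots \widehat{dx_i} \cdots \wedge dx_n$ kills all but the $j=i$ term, and reordering $dx_i$ to the front gives a sign $(-1)^{i-1}$ that cancels $(-1)^{i+1}$, so that
\[
Z^*(\iota_{\hat V}\wt{\Omega}) = \sum_i \omega_i(V, \partial_i Z)\,dV = \Omega^\sharp((dZ)_{\vec{x}})(V)\,dV,
\]
the last equality being precisely the trace computation carried out in \cref{ex:polyHamEqns}.

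Putting the two contributions together,
\[
\delta \A_S^\Omega(Z)[V] = \int_{\T^n}\bigl[\Omega^\sharp((dZ)_{\vec{x}})(V) - (dS)_{Z(\vec{x})}(V)\bigr]\,dV,
\]
and since $V \in C^\infty(\T^n, M)$ is arbitrary the fundamental lemma forces the integrand to vanish pointwise, giving \cref{eq:generalpoly}; reading the same identity from right to left yields the converse. The argument is essentially routine, with the one place that demands care being the sign bookkeeping in the wedge manipulation of $\iota_{\hat V}\wt{\Omega}$. This is exactly where the apparently arbitrary sign convention $(-1)^{i+1}$ in the definition of $\wt{\Omega}$ pays off, producing the clean identification with $\Omega^\sharp$ of \cref{ex:polyHamEqns} without leftover signs. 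As a side bonus, the computation is manifestly independent of the chosen primitives $\theta_i$, consistent with the remark preceding the lemma that $\A_S^\Omega$ is well-defined up to an additive constant.
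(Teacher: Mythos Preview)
Your proof is correct and follows essentially the same route as the paper's: compute the first variation via Cartan's formula $\Lie_{\hat V}\wt{\Theta} = d\iota_{\hat V}\wt{\Theta} + \iota_{\hat V}\wt{\Omega}$, discard the exact piece by Stokes on the closed torus, and identify $Z^*(\iota_{\hat V}\wt{\Omega})$ with $\sum_i \omega_i(V,\partial_i Z)\,dV = \Omega^\sharp((dZ)_{\vec{x}})(V)\,dV$. If anything, you are more careful than the paper, which suppresses the $d\iota_Y\wt{\Theta}$ term with the shorthand ``$\T^n$ has no boundary'' and skips the sign bookkeeping that you spell out explicitly.
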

 \begin{proof}
 Let $Y$ be a tangent vector to $Z\in C^\infty(\T^n,M)$. We view $Y$ as a map $\T^n\to TM$. Now 
 \begin{align}\label{eq:dA}
     (d\A_S^\Omega)_Z(Y) &= \int_{\T^n} Z^*\Lie_{Y(\vec{x})}\wt{\Theta}-\int_{\T^n} (dS)_{Z(\vec{x})}(Y(\vec{x}))\,dV.
 \end{align}
 To work out the first term, notice that $\T^n$ has no boundary, so that 
 \[\Lie_{Y(\vec{x})}\wt{\Theta}=\iota_{Y(\vec{x})}d\wt{\Theta}=\iota_{Y(\vec{x})}\wt{\Omega}.\] So \[Z^*\Lie_{Y(\vec{x})}\wt{\Theta}=\sum_i\omega_i(Y(\vec{x}),\partial_i Z(\vec{x}))dV.\] 
 Filling this back into \cref{eq:dA} yields that $d\A_S^\Omega=0$ precisely when
 \begin{align*}
 (dS)_{Z(\vec{x})}=\sum_i\omega_i(\cdot,\partial_i Z(\vec{x}))
 \end{align*}
 for all $\vec{x}\in \T^n$. As \cref{ex:polyHamEqns} shows, this is equivalent to \cref{eq:generalpoly} for $n=2$. For general $n$ a similar computation applies. 
 \end{proof}


\section{Prime example: coupled particle-field systems}\label{sec:particlefield}
As an important example of our theory we consider a mechanical system coupled to a field, i.e. a symplectic system coupled to a polysymplectic one. This provides an interesting example from a physicists point of view\footnote{Coupled particle-field systems are studied amongst others in \cite{spohn2004dynamics,bambusi1993some,kunze}.}, but also serves to engage the more pure mathematically inclined reader. The coupling of these two systems provides us with interesting non-linearities in the theory already when we are working with linear polysymplectic spaces. For notational simplicity, we consider only scalar fields and 2-dimensional periodic space-time for now. 

Let $q(t)$ denote the position of a particle moving in some external potential field $V_t$ on $S^1=\R/2\pi\Z$ and $\vphi(t,x)$ be a scalar field on $\T^2=(\R/T\Z)\times(\R/2\pi\Z)$ as before. We require $V_t=V_{t+T}$ to be smooth in $t$ and uniformly bounded by some constant $\mu$.  In order to avoid singularities, we assume the particle to be of finite size and let $\rho:\R\to\R$ model the coupling parameter between the particle and the field. We assume $\rho$ to be smooth and supported on some interval $(-r,r)$, where $r<\pi$ can be interpreted as the radius of the particle and $\rho$ describes the distribution of the charge over the particle.\footnote{Note that as $\rho$ is only supported on $(-r,r)\subseteq (-\pi,\pi]$, we might also view $\rho$ as a function on $S^1$ sending $x\in S^1=\R/2\pi\Z$ to $\rho(a)$, where $x = a \textup{ mod }2\pi$ and $a\in(-\pi,\pi]$. We will not make a notational difference between these two functions.} The equations of the coupled system look as follows (see \cite{particlefields}).
\begin{align}\label{eq:particlefield}\begin{split}
    \frac{d^2}{dt^2}q(t) &= -V'_t(q(t)) - \nabla(\vphi(t,\cdot)*\rho)(q(t))\\
    -\partial_t^2\vphi(t,x)+\partial_x^2\vphi(t,x)&=\vphi(t,x)+\rho(q(t)-x)
    \end{split}
\end{align}
Here $f*\rho(q):=\int_{S^1}f(x)\rho(q-x)\,dx$ for $f:S^1\to\R$. The coupled equations are described by two Hamiltonians: one for the mechanical system and one for the field. Let $u=(q,p)$, $Z=(\vphi,\pi_1,\pi_2,o)$ and define
\begin{align*}
    H_t(u,Z) &= \frac{1}{2}p^2+V_t(q)+(\vphi(t,\cdot)*\rho)(q)\\
    S_{t,x}(u,Z) &= \frac{1}{2}\vphi^2+\frac{1}{2}\pi_1^2-\frac{1}{2}\pi_2^2-\frac{1}{2}o^2+\vphi\cdot\rho(q(t)-x).
\end{align*}
Then \cref{eq:particlefield} can be reformulated as
\begin{align*}
    J\frac{d}{dt}u(t) &= \nabla_u H_t(u(t),Z(t,x))\\
    M_1\partial_tZ(t,x)+M_2\partial_xZ(t,x) &= \nabla_Z S_{t,x}(u(t),Z(t,x))
\end{align*}
where $\nabla_u$ and $\nabla_Z$ denote respectively the gradients with respect to the $u$ and $Z$ coordinates. We see that these equations indeed combine the symplectic Hamiltonian formalism and the Bridges regularized equations. Note that $u$ is now a map into $T^*S^1$ and, as before, $Z$ maps into $\R^4$.

To study these equations once again we can look at the Floer equation given by
\begin{align}\label{eq:particlefieldfloer}\begin{split}
     \partial_s \wt{u}(s,t) &= -J\partial_t\wt{u}(s,t)+\nabla_u H_t(\wt{u}(s,t),\wt{Z}(s,t,x))\\
     \partial_s \wt{Z}(s,t,x) &= - M_1\partial_t \wt{Z}(s,t,x) - M_2\partial_x \wt{Z}(s,t,x) + \nabla_Z S_{t,x}(\wt{u}(s,t),\wt{Z}(s,t,x)).
     \end{split}
\end{align}

\subsection{Asymptotics of Floer curves}\label{sub:asymptotics}
For the Floer curves of the combined system introduced above, we want to formulate and prove an analogue to lemma \ref{GW:ndg}. The energy of a Floer curve $\wt{F}:=(\wt{u},\wt{Z})$ is given as the sum of the energies of the components:
\[
E(\wt{F}) = \int_{-\infty}^{+\infty}\left(\int_{S^1}|\partial_s\wt{u}(s,t)|^2\,dt+\int_{\T^2}|\partial_s\wt{Z}(s,t,x)|^2\,dV\right)ds.
\]
As we are mostly interested in the field theory part of the equations in this article, we will assume the particle Floer curve $\wt{u}$ to be known. In a subsequent article the case of an unknown particle curve will be treated. 
\begin{theorem}\label{thm:asymp}
Let $\wt{F}=(\wt{u},\wt{Z})$ be a solution to \cref{eq:particlefieldfloer} of finite energy. We assume that there exists some smooth function $u^+\in C^\infty(S^1,T^*S^1)$, such that 
\begin{align*}
    \lim_{s\to+\infty}\wt{u}_s = u^+ && \lim_{s\to+\infty}\partial_s\wt{u}_s = 0
\end{align*}
in the $C^\infty$-topology. If $\ratio^{-2}=\left(\frac{T}{2\pi}\right)^2$ has irrationality measure\footnote{Compare with \cref{def:irrmeasure} and the assumption made in \cref{sub:degsolved}.} equal to 2, then there exists some $Z^+\in C^\infty(\T^2,\R^4)$ such that $\lim_{s\to+\infty}\wt{Z}_s=Z^+$ in the $C^\infty$-topology.
\end{theorem}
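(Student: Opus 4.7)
The plan is to view the $\wt{Z}$-equation in (\ref{eq:particlefieldfloer}) as a linear inhomogeneous evolution equation in which the (assumed known) particle curve $\wt{u}$ enters only through a source term, and then to adapt the Fourier-mode analysis of Lemma~\ref{lem:BridgesGWzero}. Writing $\nabla_Z S_{t,x}(u,Z) = AZ + \Phi(u,t,x)$ with $A = \textup{diag}(1,1,-1,-1)$ and $\Phi(u,t,x) = (\rho(q-x),0,0,0)$, and setting $\Phi_s(t,x) := \Phi(\wt{u}(s,t),t,x)$, the equation takes the form
\begin{align*}
\partial_s\wt{Z}_s + M_1\partial_t\wt{Z}_s + M_2\partial_x\wt{Z}_s - A\wt{Z}_s = \Phi_s.
\end{align*}
The hypothesis on $\wt{u}$ implies that $\Phi_s \to \Phi^+(t,x) := \Phi(u^+(t),t,x)$ in $C^\infty(\T^2,\R^4)$.

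First, I construct the candidate limit $Z^+$ as the unique $C^\infty$-solution of the stationary equation $M_1\partial_t Z^+ + M_2\partial_x Z^+ - AZ^+ = \Phi^+$, by inverting mode-by-mode the Hermitian matrix $B(m,k) := i\ratio m M_1 + ik M_2 - A$. A direct computation using the identities $M_1^2 = M_2^2 = -I$, $M_1 M_2 = M_2 M_1$, $[A,M_1]=0$ and $\{A,M_2\}=0$ (all readily checked from the explicit forms in \cref{sec:Bridges}) reveals that the characteristic polynomial of $B(m,k)$ is biquadratic in $\lambda$ with the four real roots
\begin{align*}
\pm\bigl(|\ratio m| + \sqrt{1+k^2}\bigr) \quad \textup{and} \quad \pm\bigl(\sqrt{1+k^2} - |\ratio m|\bigr).
\end{align*}
These are all nonzero precisely when $1+k^2 \neq \ratio^2 m^2$, which holds for every $(m,k)\in\Z^2$ by irrationality of $\ratio^{-2}$. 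Using the identity $|\sqrt{1+k^2}-|\ratio m|| = |1+k^2-\ratio^2 m^2|/(\sqrt{1+k^2}+|\ratio m|)$ together with the quantitative irrationality measure~2 assumption, one obtains, for every $\epsilon>0$, a polynomial lower bound $|\sqrt{1+k^2}-|\ratio m|| \geq c_\epsilon(1+k^2+m^2)^{-3/2-\epsilon}$, so $\|B(m,k)^{-1}\|$ grows only polynomially in $(m,k)$. Combined with the faster-than-polynomial decay of $\hat\Phi^+(m,k)$ coming from $\Phi^+\in C^\infty$, this places the Fourier sum $Z^+ = \sum_{m,k}B(m,k)^{-1}\hat\Phi^+(m,k)\,e^{i(\ratio mt+kx)}$ in $C^\infty(\T^2,\R^4)$.

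Second, I set $\wt{Y}_s := \wt{Z}_s - Z^+$, which satisfies the same linear PDE with source $\Delta\Phi_s := \Phi_s - \Phi^+ \to 0$ in $C^\infty$ and inherits the finite-energy bound $\int_\R\|\partial_s\wt{Y}_s\|_{L^2}^2\,ds = \int_\R\|\partial_s\wt{Z}_s\|_{L^2}^2\,ds < +\infty$. The $(t,x)$-Fourier transform decouples this into linear ODEs $\partial_s\hat{Y}_s(m,k) = -B(m,k)\hat{Y}_s(m,k) + \Delta\hat\Phi_s(m,k)$, which in the orthonormal eigenbasis of $B(m,k)$ reduce further to scalar ODEs $y'(s) = -\mu y(s) + \phi(s)$ with $\phi(s)\to 0$ and $y'\in L^2(\R)$. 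For a positive eigenvalue $\mu>0$, a standard splitting of Duhamel's integral together with $\phi\to 0$ directly yields $y(s)\to 0$. For a negative eigenvalue $\mu<0$, the general solution has an exponentially growing piece; the $L^2$-integrability of $y'$ forces precisely the initial-data relation that kills this piece, giving the explicit formula $y(s) = -\int_0^\infty e^{-|\mu|\tau}\phi(s+\tau)\,d\tau$, which tends to $0$ by dominated convergence. Hence $\hat{Y}_s(m,k)\to 0$ for every $(m,k)\in\Z^2$.

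Finally, I upgrade the mode-wise convergence to $C^\infty$-convergence by the tail-cutoff argument used in the proof of Lemma~\ref{lem:BridgesGWzero}: for any $\epsilon>0$ and orders $l_1,l_2$, the smoothness of $\wt{Z}_0$ and $Z^+$ controls $\sum_{|(m,k)|>N}(\ratio m)^{2l_1}k^{2l_2}|\hat{Y}_s(m,k)|^2$ uniformly in $s\geq 0$, while the finite sum over $|(m,k)|\leq N$ tends to $0$ by the scalar-ODE analysis above. The main technical obstacle lies in the negative-eigenvalue case: one must turn the implicit formula $y(s) = -\int_0^\infty e^{-|\mu|\tau}\phi(s+\tau)\,d\tau$ into decay estimates that are uniform enough across modes to survive summation against the polynomially-growing $\|B(m,k)^{-1}\|$. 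The polynomial lower bound on $|\mu|$ combined with the faster-than-polynomial decay of $\Delta\hat\Phi_s(m,k)$ supplies the needed margin, ultimately yielding convergence $\wt{Z}_s\to Z^+$ in every Sobolev norm and hence in $C^\infty(\T^2,\R^4)$.
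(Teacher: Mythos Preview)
Your proof is correct and follows the same overall strategy as the paper: Fourier-transform the linear inhomogeneous equation for $\wt{Z}$, compute the eigenvalues of the Hermitian matrices $B(m,k)$, extract a polynomial lower bound on the smallest eigenvalue from the irrationality-measure-2 hypothesis, define $Z^+$ as the unique stationary solution, and then prove mode-wise convergence with uniform tail control. The technical execution of the convergence step, however, is organized differently. The paper first proves an intermediate result (its Lemma~5.3) that $\partial_s\wt{Z}_s\to 0$ in $C^\infty$, by applying a maximum-principle-style contradiction argument to the Fourier ODE satisfied by $W_s:=\partial_s\hat{Z}_s$; it then feeds this together with $\hat\rho^q_s\to\hat\rho^q_+$ into the relation $\partial_sV_s=B(m,k)V_s+(\hat\rho^q_s-\hat\rho^q_+)e_1$ to bound $|B(m,k)V_s(m,k)|$ directly, and divides by the smallest eigenvalue. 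You instead bypass Lemma~5.3 entirely and solve each scalar mode explicitly via Duhamel's formula, using the $L^2$-integrability of $y'$ to select the unique bounded solution in the unstable eigendirections; the uniform tail control then comes from the uniform-in-$s$ rapid decay of $\Delta\hat\Phi_s(m,k)$ (which follows from the assumed $C^\infty$-boundedness of $\wt{u}_s$) against the merely polynomial growth of $\|B(m,k)^{-1}\|$. Your route is somewhat more direct, while the paper's route isolates the statement that the gradient-flow speed vanishes asymptotically as a result of independent interest. One small presentational point: your phrase ``the smoothness of $\wt{Z}_0$ and $Z^+$ controls the tail'' understates what is needed; the forcing tail $\sup_{s}|\Delta\hat\Phi_s(m,k)|/\lambda_{\min}(m,k)$ must also be shown to decay rapidly, which you do address at the end of the paragraph.
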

To prove \cref{thm:asymp}, we first introduce some notation. By $\hat{Z}_s$ and $\hat{\rho}^q_s$ we denote respectively the Fourier transforms of $\wt{Z}_s(t,x)$ and $\rho(\wt{q}_s(t)-x)$ with respect to both time and space. Since $\rho$ is smooth and $\wt{q}_s$ converges to $q^+$ in the $C^\infty$-topology, we get that $\lim_{s\to+\infty}\rho(\wt{q}_s(t)-x)$ is well defined. The limit of the Fourier series is denoted by $\hat{\rho}^q_+:=\lim_{s\to+\infty}\hat{\rho}^q_s$. Moreover, $\partial_s\rho(\wt{q}_s(t)-x)$ converges to 0 in $C^\infty$ for $s$ going to $+\infty$.

From the Floer equation we see that $\hat{Z}_s$ satisfies
\begin{align}\label{eq:FourierFloerPF}
    \partial_s\hat{Z}_s(m,k) = B(m,k)\hat{Z}_s(m,k)+\hat{\rho}^q_s(m,k)e_1,
\end{align}
for all $m,k\in\Z$. Here, $e_1=\begin{pmatrix}1&0&0&0\end{pmatrix}^T$ is the first basis vector of $\R^4$ and 
\begin{align*}
    B(m,k) = \begin{pmatrix}1&0&0&0\\0&1&0&0\\0&0&-1&0\\0&0&0&-1\end{pmatrix}-\ratio imM_1-ikM_2.
\end{align*}
The four distinct eigenvalues $\lambda_1^\pm(m,k)$ and $\lambda_2^\pm(m,k)$ of the Hermitian matrix $B(m,k)$ are given by 
\begin{align*}
    \lambda_1^+(m,k) &= -\lambda_1^-(m,k)=\ratio|m|+\sqrt{1+k^2}>1\\
    \lambda_2^+(m,k) &= -\lambda_2^-(m,k)=\left|\ratio|m|-\sqrt{1+k^2}\right|>0.
\end{align*}
\begin{proposition}\label{prop:eigenvalues}
There exists some constant $\wt{c}>0$ such that $\lambda_2^+(m,k)>\frac{\wt{c}}{(1+k^2)^2}$ for all $m,k\in\Z$. 
\end{proposition}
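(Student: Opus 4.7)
The plan is to bring the irrationality measure of $\alpha := \ratio^{-2}$ into play by rewriting $\lambda_2^+(m,k)$ so that a rational approximation of $\alpha$ appears. Rationalizing the difference of radicals gives
$$\lambda_2^+(m,k) = \frac{|\ratio^2 m^2 - (1+k^2)|}{\ratio|m| + \sqrt{1+k^2}} = \frac{\ratio^2\,(1+k^2)\,\bigl|\tfrac{m^2}{1+k^2} - \alpha\bigr|}{\ratio|m| + \sqrt{1+k^2}}.$$
By hypothesis $\alpha$ has irrationality measure $2$, so for any fixed $\rho > 2$, \cref{def:irrmeasure} supplies a constant $c_\rho > 0$ with $|\alpha - p/q| \geq c_\rho/q^\rho$ for every $p/q \in \Q$. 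Applying this with $p = m^2$, $q = 1+k^2$, and $\rho = 5/2$ yields
$$\lambda_2^+(m,k) \geq \frac{c_\rho\,\ratio^2}{(1+k^2)^{3/2}\bigl(\ratio|m| + \sqrt{1+k^2}\bigr)}.$$

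Next I would split into two regimes according to the relative size of $\ratio|m|$ and $\sqrt{1+k^2}$. If $\ratio|m| \geq 2\sqrt{1+k^2}$, then no irrationality argument is needed: directly $\lambda_2^+(m,k) = \ratio|m| - \sqrt{1+k^2} \geq \sqrt{1+k^2} \geq 1 \geq (1+k^2)^{-2}$. In the complementary regime $\ratio|m| < 2\sqrt{1+k^2}$, the denominator of the displayed estimate satisfies $\ratio|m| + \sqrt{1+k^2} \leq 3\sqrt{1+k^2}$, so the bound collapses to $\lambda_2^+(m,k) \geq (c_\rho \ratio^2/3)\cdot (1+k^2)^{-2}$. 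Setting $\wt{c} := \min\{1,\, c_\rho \ratio^2/3\}$ then covers both cases and yields the proposition.

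The only real subtlety is choosing $\rho$ correctly. Because the rationalization absorbs only a factor of $\sqrt{1+k^2}$ into the denominator, the exponent appearing in the final bound is $\rho - 1/2$, and obtaining $(1+k^2)^{-2}$ forces $\rho \leq 5/2$. This is exactly where the hypothesis that the irrationality measure equals $2$ (rather than being strictly greater) is essential: it guarantees the existence of a valid $c_\rho$ for $\rho = 5/2$, whereas $\rho = 2$ itself would fail in general. A secondary but harmless point is that the bound $c_\rho/q^\rho$ must be applied with $q = 1+k^2$ even if $m^2/(1+k^2)$ is not in lowest terms, which is allowed since the stated inequality holds for every representation $p/q\in\Q$.
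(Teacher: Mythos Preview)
Your proof is correct and follows essentially the same route as the paper's: both apply the irrationality-measure bound on $\ratio^{-2}$ with exponent $5/2$ to the quantity $\bigl|\tfrac{m^2}{1+k^2}-\ratio^{-2}\bigr|$, then split into a large-$|m|$ regime (handled directly) and a small-$|m|$ regime (where the denominator $\ratio|m|+\sqrt{1+k^2}$ is comparable to $\sqrt{1+k^2}$). The only differences are cosmetic---your case threshold is $\ratio|m|\gtrless 2\sqrt{1+k^2}$ while the paper uses $\tfrac{|m|}{\sqrt{1+k^2}}\gtrless 1+\tfrac{T}{2\pi}$, and you rationalize via the conjugate whereas the paper factors the difference of squares---and your remarks on why $\rho=5/2$ is the correct choice and why non-reduced fractions are harmless match the paper's implicit reasoning.
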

\begin{proof}
By assumption $\ratio^{-2}$ has irrationality measure 2, so for any $\delta>0$ there exists some $c>0$, such that 
\[
\left|\left(\frac{T}{2\pi}\right)^2-\frac{\alpha}{\beta}\right|>\frac{c}{\beta^{2+\delta}}
\]
for every $\frac{\alpha}{\beta}\in\Q$. In particlular we get for $\delta=\frac{1}{2}$ that there is some $c>0$ such that
\begin{align}\label{eq:mkineq}
\left|\frac{T}{2\pi}-\frac{|m|}{\sqrt{1+k^2}}\right|\left|\frac{T}{2\pi}+\frac{|m|}{\sqrt{1+k^2}}\right|>\frac{c}{(1+k^2)^{\frac{5}{2}}}
\end{align}
for all $m,k\in\Z$ (this follows from letting $\alpha=m^2$ and $\beta=1+k^2$). There are two cases to consider.\newline
\emph{Case 1:} If $\frac{|m|}{\sqrt{1+k^2}}\geq 1+\frac{T}{2\pi}$ then 
\begin{align*}
\lambda_2^+(m,k) &= \frac{2\pi}{T}|m|-\sqrt{1+k^2}\\
&\geq \frac{2\pi}{T}\sqrt{1+k^2}\\
&\geq\frac{2\pi}{T}\cdot\frac{1}{(1+k^2)^2}
\end{align*}
where the last step follows from the fact that $1+k^2\geq 1$. \newline
\emph{Case 2:} If $\frac{|m|}{\sqrt{1+k^2}}< 1+\frac{T}{2\pi}$ then
\[
\frac{T}{2\pi}+\frac{|m|}{\sqrt{1+k^2}}<1+\frac{T}{\pi}=:C.
\]
Filling this into \cref{eq:mkineq} gives
\[
\frac{c}{(1+k^2)^{\frac{5}{2}}}<C\left|\frac{T}{2\pi}-\frac{|m|}{\sqrt{1+k^2}}\right|.
\]
Thus
\begin{align*}
    \lambda_2^+(m,k) &= \left|\ratio|m|-\sqrt{1+k^2}\right|\\
    &>\frac{2\pi}{T}\cdot\frac{c}{C}\cdot\frac{1}{(1+k^2)^2}.
\end{align*}
\newline
In both cases we see that $\lambda_2^+(m,k)>\frac{\wt{c}}{(1+k^2)^2}$ when we pick $0<\wt{c}<\textup{max}\{\frac{2\pi}{T},\frac{2\pi c}{TC}\}$.
\end{proof}

\subsubsection*{Energy argument}
By assumption the energy of the Floer curve is finite. This implies the existence of a sequence of numbers $\{s_j\}$ diverging to $+\infty$, such that $\int_{\T^2}|\partial_s\wt{Z}(s_j,t,x)|^2\,dV$ converges to 0 for $j\to+\infty$. Note that $\int_{\T^2}|\partial_s\wt{Z}(s_j,t,x)|^2\,dV = ||\partial_s\hat{Z}_{s_j}||_{\ell^2}$.

\begin{lemma}\label{lem:dstozero}
We have $\lim_{s\to+\infty}\partial_s\wt{Z}_s=0$ in the $C^\infty$-topology.
\end{lemma}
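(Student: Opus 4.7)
The plan is to apply the same Fourier-analytic strategy used in the proof of \cref{lem:BridgesGWzero} and in the energy argument above, but now to the equation satisfied by $\wt{W} := \partial_s \wt{Z}$ rather than by $\wt{Z}$ itself. Differentiating the second line of \cref{eq:particlefieldfloer} in $s$ and using that $\nabla_Z S_{t,x}(u, Z) = D Z + \rho(q-x)\, e_1$ with $D = \mathrm{diag}(1,1,-1,-1)$ (so that the $Z$-Hessian of $S_{t,x}$ is the constant matrix $D$, and the $u$-dependence of $\nabla_Z S_{t,x}$ enters linearly through the coupling), one finds that $\wt{W}$ satisfies the linear inhomogeneous equation
\[
\partial_s \wt{W} + M_1 \partial_t \wt{W} + M_2 \partial_x \wt{W} - D \wt{W} \;=\; h_s(t,x)\, e_1,
\]
with source $h_s(t,x) := \rho'(\wt{q}_s(t) - x)\, \partial_s \wt{q}_s(t)$. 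The assumptions $\wt{u}_s \to u^+$ and $\partial_s \wt{u}_s \to 0$ in $C^\infty(S^1)$, together with smoothness and compact support of $\rho$, imply simultaneously that $h_s \to 0$ in $C^\infty(\T^2)$ and that $\sup_s \|h_s\|_{H^N} < \infty$ for every $N$.

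Taking Fourier in $(t,x)$, each coefficient then obeys
\[
\partial_s \hat{W}_s(m,k) \;=\; B(m,k)\, \hat{W}_s(m,k) + \hat{h}_s(m,k)\, e_1,
\]
with $B(m,k)$ the Hermitian matrix introduced before \cref{prop:eigenvalues}, whose four eigenvalues $\pm \lambda_1^+(m,k), \pm \lambda_2^+(m,k)$ are all nonzero. Splitting $\hat{W}_s = \hat{W}^+_s + \hat{W}^-_s$ according to the positive and negative spectral subspaces of $B(m,k)$, the finite-energy assumption (via the energy argument that produces a sequence $s_j \to \infty$ with $\|\hat{W}_{s_j}\|_{\ell^2} \to 0$) forces the unstable component to be given by the improper backward integral
\[
\hat{W}^+_s(m,k) \;=\; - \int_s^\infty e^{-(r-s) B^+(m,k)} P^+ \hat{h}_r(m,k)\, e_1\, dr,
\]
while $\hat{W}^-_s(m,k)$ is given by the standard variation-of-parameters formula from time $s = 0$. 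Since $\hat{h}_r(m,k) \to 0$ as $r \to \infty$ for every fixed mode, both formulas deliver pointwise-in-$(m,k)$ convergence $\hat{W}_s(m,k) \to 0$.

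The remaining—and main—task is to upgrade this pointwise decay of Fourier coefficients to convergence in $H^N$ for every $N$, which is equivalent to $C^\infty$-convergence. Estimating the two integrals by the operator bounds $\|e^{\pm \sigma B^\pm(m,k)} P^\pm\| \le e^{-\sigma \min_i \lambda_i^+(m,k)}$ and invoking the spectral gap $\lambda_2^+(m,k) \ge \wt{c}/(1+k^2)^2$ from \cref{prop:eigenvalues} gives a uniform bound of the form
\[
\sup_s |\hat{W}_s(m,k)| \;\lesssim\; (1+k^2)^2 \sup_r |\hat{h}_r(m,k)| \;+\; |\hat{W}_0(m,k)|.
\]
Uniform $C^\infty$-smoothness of the family $\{h_r\}_r$ together with smoothness of the initial datum $\wt{Z}_0$ yields rapid decay in $(m,k)$ of the right-hand side, so that $(1+m^2+k^2)^N |\hat{W}_s(m,k)|^2$ is dominated by a summable function of $(m,k)$ once sufficiently many Sobolev derivatives are invested. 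Dominated convergence then gives $\|\partial_s \wt{Z}_s\|_{H^N} \to 0$ for every $N$, hence $C^\infty$-convergence. The principal obstacle is precisely the slow spectral gap $\lambda_2^+ \sim (1+k^2)^{-2}$: a naive $\ell^2$-energy estimate would not close, and absorbing the polynomial loss $1/\lambda_2^+$ is what forces us to spend extra derivatives on the source, which is why smoothness of $\rho$ and of the particle Floer curve $\wt{u}$ enter in an essential way.
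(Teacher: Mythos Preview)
Your argument is correct and shares the paper's overall Fourier-mode strategy: differentiate the Floer equation in $s$, pass to coefficients governed by the Hermitian matrix $B(m,k)$, and use the polynomial spectral gap of \cref{prop:eigenvalues} together with $C^\infty$-decay of the source $\partial_s\rho(\wt q_s(t)-x)$. The one cosmetic point is that the dominating function involves $|\hat W_0(m,k)|=|\partial_s\hat Z_0(m,k)|$ rather than $|\hat Z_0(m,k)|$; its rapid decay follows from the Floer equation at $s=0$, not directly from smoothness of $\wt Z_0$.

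Where you genuinely diverge from the paper is in the ODE step. The paper never writes the Duhamel representation or invokes dominated convergence. Instead it runs a barrier argument: given $\eps>0$ and a polynomial weight $p(m,k)$, it chooses $s^0$ so that $|\partial_s\hat\rho^q_s(m,k)|<\eps/|p(m,k)|$ for $s>s^0$, and then shows by contradiction that $|W_s(m,k)|<2\eps/(|p(m,k)|\lambda(m,k))$ for all $s>s^0$. The contradiction is obtained by picking an eigencomponent that exceeds the threshold, showing its $s$-derivative has a definite sign, and then analysing the first time the derivative vanishes against the subsequence $s_j$ with $\|W_{s_j}\|_{\ell^2}\to0$. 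Summing the resulting bound over $(m,k)$, with the free choice of $p$ absorbing $\lambda(m,k)^{-1}\lesssim(1+k^2)^2$, gives $L^1$-decay of every derivative directly. Your Duhamel-plus-DCT route is the more standard PDE manoeuvre and makes the stable/unstable dichotomy explicit; the paper's comparison argument is shorter and avoids justifying the improper backward integral, at the price of a slightly more delicate contradiction. Both approaches consume exactly the same two ingredients in the same way.
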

\begin{proof}
We prove that the $L^1$-norm of all space-time derivatives of $\partial_s\wt{Z}_s$ converge to 0. Let $\epsilon>0$ and $p(m,k)$ be any polynomial with no integer roots. As we know that $\partial_s\rho(\wt{q}_s(t)-x)$ converges to 0 with all space-time derivative, there exists some $s^0$ such that for $s>s^0$ we have
\[
\sum_{m,k}|\partial_s\hat{\rho}^q_s(m,k)p(m,k)|^2<\epsilon^2.
\]
Then for all $m,k\in\Z$ it follows that 
\begin{align}\label{eq:derestim}
    |\partial_s\hat{\rho}^q_s(m,k)|<\frac{\epsilon}{|p(m,k)|}
\end{align} for $s>s^0$.

Define $W_s:=\partial_s\hat{Z}_s$. First, we want to prove that $|W_s(m,k)|<\frac{2\epsilon}{|p(m,k)|\lambda(m,k)}$ for all $s>s^0$ and $m,k\in\Z$, where $\lambda(m,k)$ is the smallest positive eigenvalue of $B(m,k)$.\footnote{The idea for this part of the proof is based on \cite[Lemma 7.2]{fabert2020hamiltonian}.} For the sake of contradiction suppose that there exists some $s^1>s^0$ such that $|W_{s^1}(m,k)|\geq \frac{2\epsilon}{|p(m,k)|\lambda(m,k)}$ for some $m,k\in\Z$. Then, if we write out $W_{s^1}(m,k)$ in an eigenbasis of $B(m,k)$, there is some component, denoted $W^i_{s^1}(m,k)$, belonging to eigenvalue $\lambda^i(m,k)$ for which
\[
|W_{s^1}^i(m,k)|\geq \frac{\epsilon}{|p(m,k)|\lambda(m,k)}.
\]
Without loss of generality, we may assume $W_{s^1}^i(m,k)>0$. Then by taking derivatives with respect to $s$ of \cref{eq:FourierFloerPF} we see that 
\[
\partial_s W^i_{s^1}(m,k) = \lambda^i(m,k)W^i_{s^1}(m,k)+(\partial_s\hat{\rho}^q_{s^1}(m,k)e_1)^i>0.
\]
As $W_{s^1}^i(m,k)>0$ and $W_{s_j}(m,k)$ converges to 0, there has to be a number $s^2>s^1$ with $\partial_s W^i_{s^2}=0$ and $\partial_s W^i_s(m,k)>0$ for all $s\in(s^1,s^2)$. So 
\begin{align*}
    W^i_{s^2}(m,k)>W^i_{s^1}(m,k)>\frac{\epsilon}{|p(m,k)|\lambda(m,k)}
\end{align*}
and
\begin{align*}
    0=\partial_s W^i_{s^2}(m,k)=\lambda^i(m,k)W^i_{s^2}(m,k)+(\partial_s\hat{\rho}^q_{s^2}(m,k)e_1)^i,
\end{align*}
contradicting \cref{eq:derestim}. This shows that indeed $|W_s(m,k)|<\frac{2\epsilon}{|p(m,k)|\lambda(m,k)}$ for all $s>s^0$ and $m,k\in\Z$.

When $\lambda(m,k)=\lambda_1^+(m,k)$ then $\lambda(m,k)^{-1}<1$ and when $\lambda(m,k)=\lambda_2^+(m,k)$ then $\lambda(m,k)^{-1}<\wt{c}^{-1}(1+k^2)^2$ by \cref{prop:eigenvalues}. So either way we see that $\lambda(m,k)^{-1}$ is at most polynomial in $m,k$. For all $l_1,l_2\in\Z$ we get that
\begin{align*}
    ||\partial_t^{l_1}\partial_x^{l_2}W_s(m,k)||_{L^1} &= \sum_{m,k}|W_s(m,k)k^{l_1}m^{l_2}|\\
    &<2\epsilon\sum_{m,k}\frac{\lambda(m,k)^{-1}k^{l_1}m^{l_2}}{|p(m,k)|}\\
    &<\epsilon\cdot\textup{const},
\end{align*}
where the last line follows from the fact that for fixed $l_1,l_2$ we can choose $p(m,k)$ such that $\frac{\lambda(m,k)^{-1}k^{l_1}m^{l_2}}{|p(m,k)|}<\frac{1}{m^2k^2}$.
This proves the lemma.
\end{proof}

\subsubsection*{The limit}
Now define $\hat{Z}^+(m,k):=-B(m,k)^{-1}\hat{\rho}^q_+(m,k)e_1$ and let $Z^+$ be the inverse Fourier transform of $\hat{Z}^+$. Note that the formula for $\hat{Z}^+$ makes sense, as $B(m,k)$ never has vanishing eigenvalues. First of all we would like to see that $Z^+$ is indeed a smooth function. 
\begin{proposition}
The function $Z^+$ defined above is a smooth function on $\T^2$.
\end{proposition}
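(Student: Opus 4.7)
The plan is to prove smoothness of $Z^+$ by showing that its Fourier coefficients $\hat Z^+(m,k)$ are rapidly decreasing, i.e. they decay faster than any polynomial in $(m,k)$. Since rapid decay of Fourier coefficients on $\T^2$ is equivalent to smoothness of the function, this reduces the problem to a two-factor estimate on $|\hat Z^+(m,k)| = |B(m,k)^{-1} \hat\rho^q_+(m,k)\, e_1|$: bound the operator norm $\|B(m,k)^{-1}\|$ from above by a polynomial in $(m,k)$, and use that $\hat\rho^q_+$ is rapidly decreasing.

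First I would show the polynomial bound on $\|B(m,k)^{-1}\|$. Since $B(m,k)$ is Hermitian (as noted right after its definition), its operator norm inverse equals $1/\min_i|\lambda_i(m,k)|$, where the minimum runs over the eigenvalues $\pm\lambda_1^+(m,k),\pm\lambda_2^+(m,k)$. The eigenvalue $\lambda_1^+(m,k) = \ratio|m| + \sqrt{1+k^2} > 1$ is bounded below by a constant, so the only dangerous direction is $\lambda_2^+$. Here \cref{prop:eigenvalues} gives exactly what we need: $\lambda_2^+(m,k) > \wt c/(1+k^2)^2$, so
\begin{align*}
\|B(m,k)^{-1}\| \;\leq\; \frac{1}{\min\{1,\lambda_2^+(m,k)\}} \;\leq\; \wt c^{\,-1}(1+k^2)^2 + 1,
\end{align*}
which grows at most polynomially in $(m,k)$.

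Next I would observe that $\hat\rho^q_+$ is rapidly decreasing. Indeed, $\rho(q^+(t)-x)$ is a smooth function on $\T^2$: smoothness of $\rho$ and of $q^+$ (given by assumption in \cref{thm:asymp}) ensures this, and the argument $q^+(t)-x$ is a well-defined smooth function on $\T^2$ because $\rho$ has been extended $2\pi$-periodically in $x$. Consequently $\hat\rho^q_+(m,k)$ decays faster than any polynomial in $(m,k)$.

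Combining the two estimates, for each $N\in\N$ there is a constant $C_N$ such that
\begin{align*}
|\hat Z^+(m,k)| \;\leq\; \|B(m,k)^{-1}\|\,|\hat\rho^q_+(m,k)| \;\leq\; \frac{C_N}{(1+|m|+|k|)^N},
\end{align*}
because the polynomial blow-up of $\|B(m,k)^{-1}\|$ is absorbed by the rapid decay of $\hat\rho^q_+$. Standard Fourier theory on $\T^2$ then implies that $Z^+$ is smooth. The main obstacle is the estimate on $\|B(m,k)^{-1}\|$ along the diagonal where $\ratio|m|\approx\sqrt{1+k^2}$; this is exactly the small-divisor issue that \cref{prop:eigenvalues} has already settled using the irrationality-measure hypothesis on $\ratio^{-2}$, so no additional work is required here.
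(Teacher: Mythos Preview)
Your proposal is correct and follows essentially the same approach as the paper: both use \cref{prop:eigenvalues} to bound $\|B(m,k)^{-1}\|$ polynomially, invoke smoothness of $\rho(q^+(t)-x)$ to get rapid decay of $\hat\rho^q_+$, and conclude smoothness of $Z^+$ via Fourier analysis. The only cosmetic difference is that the paper phrases the last step as showing that $\sum_{m,k}|\hat Z^+(m,k)m^{l_1}k^{l_2}|<\infty$ for all $l_1,l_2$, whereas you state the equivalent rapid-decay criterion directly.
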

\begin{proof}
We prove that all derivatives of $Z^+$ have finite $L^1$-norm. This implies that they are all continuous and thus that $Z^+$ is smooth. Let $l_1,l_2$ be integers. Then
\begin{align*}
    ||\partial_t^{l_1}\partial_x^{l_2}Z^+||_{L^1}=\sum_{m,k}|\hat{Z}^+(m,k)m^{l_1}k^{l_2}| &\leq \sum_{m,k}\lambda(m,k)^{-1}|m^{l_1}k^{l_2}\hat{\rho}^q_+(m,k)|,
\end{align*}
where $\lambda(m,k)$ is again the smallest positive eigenvalue of $B(m,k)$. Just like before, $\lambda(m,k)^{-1}$ is at most a polynomial expression in $m$ and $k$ and it follows from smoothness of $\rho(q^+(t)-x)$ that $||\partial_t^{l_1}\partial_x^{l_2}Z^+||_{L^1}<+\infty$.
\end{proof}

Finally we can finish the proof of \cref{thm:asymp} with the following lemma.
\begin{lemma}
For $Z^+$ defined as above it holds that $\lim_{s\to+\infty}\wt{Z}_s=Z^+$ in the $C^\infty$-topology.
\end{lemma}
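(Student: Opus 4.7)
The plan is to prove $L^2$-convergence $\partial_t^{l_1}\partial_x^{l_2}(\wt{Z}_s - Z^+) \to 0$ for all $l_1,l_2 \in \N$; by Sobolev embedding on $\T^2$ this yields $C^\infty$ convergence. The key algebraic manoeuvre is to rearrange the Fourier Floer equation \eqref{eq:FourierFloerPF}. Since $B(m,k)$ has no vanishing eigenvalue, it is invertible, and
\begin{align*}
\hat{Z}_s(m,k) = B(m,k)^{-1}\bigl(\partial_s \hat{Z}_s(m,k) - \hat{\rho}^q_s(m,k) e_1\bigr).
\end{align*}
Subtracting the definition of $\hat{Z}^+(m,k) = -B(m,k)^{-1} \hat{\rho}^q_+(m,k) e_1$ gives the clean identity
\begin{align*}
\hat{Z}_s(m,k) - \hat{Z}^+(m,k) = B(m,k)^{-1}\bigl(\partial_s \hat{Z}_s(m,k) - g_s(m,k) e_1\bigr),
\end{align*}
where $g_s(m,k) := \hat{\rho}^q_s(m,k) - \hat{\rho}^q_+(m,k)$. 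This reduces the convergence problem to a pair of decay statements, one for $\partial_s\hat{Z}_s$ and one for $g_s$.

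Next I would estimate the operator norm of $B(m,k)^{-1}$. Because $B(m,k)$ is Hermitian, $\|B(m,k)^{-1}\|$ equals $\lambda(m,k)^{-1}$, the reciprocal of its smallest positive eigenvalue. Since $\lambda_1^+(m,k) > 1$, Proposition \ref{prop:eigenvalues} supplies a constant $C > 0$ with $\lambda(m,k)^{-1} \le C(1+k^2)^2$ for all $m,k \in \Z$. Hence for any $l_1, l_2$,
\begin{align*}
\bigl\|\partial_t^{l_1}\partial_x^{l_2}(\wt{Z}_s - Z^+)\bigr\|_{L^2}^2 \le 2C^2 \sum_{m,k} (1+k^2)^4 m^{2l_1} k^{2l_2} \bigl(|\partial_s \hat{Z}_s(m,k)|^2 + |g_s(m,k)|^2\bigr).
\end{align*}

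To send this right-hand side to zero I would invoke two inputs. First, \cref{lem:dstozero} gives $\partial_s \wt{Z}_s \to 0$ in $C^\infty(\T^2,\R^4)$, which translates directly to polynomial decay of $\partial_s \hat{Z}_s(m,k)$ of any order, uniformly for $s$ large. Second, the assumption $\wt{u}_s \to u^+$ in $C^\infty$ combined with smoothness of $\rho$ yields $\rho(\wt{q}_s(t) - x) \to \rho(q^+(t) - x)$ in $C^\infty(\T^2,\R)$, so $g_s$ enjoys the analogous polynomial decay. Choosing, for prescribed $l_1, l_2$ and $\epsilon > 0$, a threshold $s_0$ at which both sums with the weight $(1+k^2)^4 m^{2l_1} k^{2l_2}$ are below $\epsilon/(4C^2)$ then completes the argument.

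The main point of care is the control of the small eigenvalues $\lambda_2^+(m,k)$: without the Diophantine bound of \cref{prop:eigenvalues}, the inverse $B(m,k)^{-1}$ could blow up faster than any polynomial in $(m,k)$ and obstruct absorption of the high-frequency modes. The irrationality-measure-$2$ hypothesis on $\ratio^{-2}$ is precisely what converts a potentially catastrophic small-denominator phenomenon into the benign polynomial loss $(1+k^2)^2$, and hence is the only nontrivial ingredient beyond \cref{lem:dstozero} and the assumed asymptotics of $\wt{u}_s$.
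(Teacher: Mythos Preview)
Your proposal is correct and follows essentially the same route as the paper: rearrange the Fourier Floer equation to express $\hat{Z}_s - \hat{Z}^+$ as $B(m,k)^{-1}$ applied to $\partial_s\hat{Z}_s - (\hat{\rho}^q_s - \hat{\rho}^q_+)e_1$, then combine the polynomial bound on $\lambda(m,k)^{-1}$ from \cref{prop:eigenvalues} with the $C^\infty$ decay of $\partial_s\wt{Z}_s$ (from \cref{lem:dstozero}) and of $\hat{\rho}^q_s - \hat{\rho}^q_+$. The only cosmetic difference is that the paper estimates $L^1$-norms of derivatives and uses Cauchy--Schwarz on the Fourier side, whereas you work directly with $L^2$-norms and invoke Sobolev embedding; both versions absorb the $(1+k^2)^2$ loss from $B(m,k)^{-1}$ by the rapid decay of the right-hand side in exactly the same way.
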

\begin{proof}
Again, we will prove convergence by proving that the $L^1$-norms of all derivatives converge. Let $V_s(m,k)=\hat{Z}_s(m,k)-\hat{Z}^+(m,k)$. Then 
\begin{align}\label{eq:Vs}
    \partial_sV_s(m,k)=B(m,k)V_s(m,k)+(\hat{\rho}^q_s(m,k)-\hat{\rho}^q_+(m,k))e_1.
\end{align}
Let $\epsilon>0$ and $p(m,k)$ a polynomial with no integer roots. By \cref{lem:dstozero} and the fact that $\hat{\rho}^q_s$ converges to $\hat{\rho}^q_+$, there exists some $s^0$ such that for $s>s^0$
\begin{align*}
    \sum_{m,k} |\partial_sV_s(m,k)p(m,k)|^2 &< \epsilon\\
    \sum_{m,k} |(\hat{\rho}^q_s(m,k)-\hat{\rho}^q_+(m,k))p(m,k)|^2 &< \epsilon.
\end{align*}
For integers $l_1,l_2$, we have that
\begin{align}\label{eq:ZsminZplus}
   ||\partial_t^{l_1}\partial_x^{l_2}(\wt{Z}_s-Z^+)||_{L^1} &= \sum_{m,k}|V_s(m,k)m^{l_1}k^{l_2}| \\
   &< \sum_{m,k}|p(m,k)B(m,k)V_s(m,k)|\cdot|m^{l_1}k^{l_2}\frac{\lambda(m,k)^{-1}}{p(m,k)}|.
\end{align}
\Cref{eq:Vs} implies that
\[
|p(m,k)B(m,k)V_s(m,k)| \leq |\partial_sV_s(m,k)p(m,k)|+|(\hat{\rho}^q_s(m,k)-\hat{\rho}^q_+(m,k))p(m,k)|,
\]
which combined with \cref{eq:ZsminZplus} and the Cauchy-Schwarz inequality gives
\begin{align*}
    ||\partial_t^{l_1}\partial_x^{l_2}(\wt{Z}_s-Z^+)||_{L^1} &< \sum_{m,k} |\partial_sV_s(m,k)p(m,k)|^2\sum_{m,k}\left|m^{l_1}k^{l_2}\frac{\lambda(m,k)^{-1}}{p(m,k)}\right|^2\\
    &\,+\sum_{m,k} |(\hat{\rho}^q_s(m,k)-\hat{\rho}^q_+(m,k))p(m,k)|^2\sum_{m,k}\left|m^{l_1}k^{l_2}\frac{\lambda(m,k)^{-1}}{p(m,k)}\right|^2\\
    &< \epsilon\sum_{m,k}\left|m^{l_1}k^{l_2}\frac{\lambda(m,k)^{-1}}{p(m,k)}\right|^2
\end{align*}
for $s>s^0$.
Again we see by \cref{prop:eigenvalues} that we can pick $p(m,k)$ such that the last line is smaller than some constant multiple of $\epsilon$. So indeed the $L^1$ norms of all derivatives of $\wt{Z}_s-Z^+$ converge to 0, proving the lemma.
\end{proof}

\subsection{Lorentz model}\label{sub:Lorentz}
Notice that in \cref{eq:particlefield} it seems like we broke the symmetry of space and time. When introducing a particle this is inevitable since, from the viewpoint of the particle, space and time are split and the particle has a finite size only in space. However, if the particle would be moving at some different speed it might experience a different splitting of space and time. The particle equation would become
\begin{align*}
      \frac{d^2}{dt'^2}q(t') &= -V'_{t'}(q(t')) - \nabla(\vphi(t',\cdot)*\rho)(q(t')),
\end{align*}
where $t'$ is the new time coordinate. How would the field equation change? According to the Lorentz model\footnote{Note that we are dealing with a simplification of the Lorentz model here in which the speed of the particle is significantly bigger than its acceleration. This allows us to regard the reference frame of the particle to be constant. For an explanation of the Lorentz model in full generality, see \cite{spohn2004dynamics}}, explained in \cite{spohn2004dynamics}, the particle has a finite size in its own reference frame and thus the field equation becomes
\begin{align*}
      -\partial_t^2\vphi(t,x)+\partial_x^2\vphi(t,x)&=\vphi(t,x)+\rho(q(t')-x'),
\end{align*}
where $t'$ and $x'$ are the space-time coordinates in the rest frame of the particle. Note that $t'$ and $x'$ are functions of $t$ and $x$ and all the proofs of \cref{sub:asymptotics} still hold. The only thing about the function $\rho(q(t)-x)$ that we used in that section was that it was smooth and changing the space-time coordinates to $(t',x')$ does not change that. In fact, we could even have treated two or more particles moving in different reference frames and all interacting with the field. For two particles the equations look as follows
\begin{align*}
    \frac{d^2}{dt^2}q_1(t) &= -V'_{1,t}(q_1(t)) - \nabla(\vphi(t,\cdot)*\rho)(q_1(t))\\
    \frac{d^2}{dt'^2}q_2(t') &= -V'_{2,t'}(q_2(t')) - \nabla(\vphi(t',\cdot)*\rho)(q_2(t'))\\
    -\partial_t^2\vphi(t,x)+\partial_x^2\vphi(t,x)&=\vphi(t,x)+\rho(q_1(t)-x)+\rho(q_2(t')-x').
\end{align*}
Still, all the proofs of \cref{sub:asymptotics} hold. 

This is where we see a clear advantage of our covariant theory over \cite{paperoliverniek,particlefields}. Fabert and Lamoree need a fixed splitting of time and space to exist before starting with the analysis of the equations. They demand the non-linearities in the equations to satisfy conditions that explicitly depend on the chosen space-time coordinates. We see that in our theory, no such splitting a priori is required and we can deal with non-linearities coming from different choices of coordinates simultaneously.

\begin{remark}
In this section we fixed our attention to particles moving in 1-dimensional periodic space. However, the proofs generalize to higher dimensional spaces. In fact, for general $n$, we can take $q$ to be a particle constrained to any submanifold $Q$ of $\T^{n-1}$, where $\T^{n-1}$ denotes $(n-1)$-dimensional periodic space and all space periods are taken to be equal to $2\pi$. The field is defined on $\T^n=(\R/T\Z)\times \T^{n-1}$. Now, we can still prove that the eigenvalues of the linearized operator converge to 0 at most with polynomial speed, so that the same results hold as above. The proofs for general $n$ will be part of our subsequent article. This remark serves mostly to show that by looking at particles constrained to any $Q\subseteq \T^{n-1}$, we can couple our theory of CFT to any Hamiltonian system on a cotangent bundle. 
\end{remark}

\subsection{Existence of Floer curves}\label{sub:existence}
We finish this article with the proof that solutions to the particle-field Floer \cref{eq:particlefieldfloer} actually exist. This can be done by using results from Fabert and Lamoree \cite{paperoliverniek,particlefields,fredholm}. Recall from \cref{sub:infvp} that we can translate the Floer equation into the infinite-dimensional setting used by Fabert and Lamoree. Define $\Hil:=L^2_0(S^1,\R^4)$, the space of square-integrable functions with mean zero. After endowing $T^*S^1\times \Hil$ with the complex structure $\textup{Diag}(J,M_1)$, solutions to \cref{eq:particlefieldfloer} are in one-to-one correspondence with infinite-dimensional Floer curves coming from the Hamiltonian
\begin{align*}
    \niek{H}_t : T^*S^1\times \Hil &\to \R\\
    (u,Z)&\mapsto \frac{1}{2}\langle AZ,Z\rangle_{\Hil} +F_t(u,Z),
\end{align*}
where 
\begin{align}\label{eq:A}
    A = \begin{pmatrix}
    1&0&0&0\\0&1&0&0\\0&0&-1&0\\0&0&0&-1
    \end{pmatrix}-M_2\partial_x
\end{align}
and $F_t(u,Z)=\frac{1}{2}p^2+V_t(q)+(\vphi*\rho)(q)$. In order to use \cite[Theorem 10.4]{paperoliverniek} and conclude that a Floer curve exists, we must check that $(A,T)$ and $F_t$ are admissible according to definitions 2.1 and 2.5 in \cite{fredholm}. First, notice that since $M_2\partial_x$ and hence $A$ is self-adjoint, there exists a complete basis\footnote{This basis can be found by looking at the Fourier transform of $\Hil$.} $\{e_k^\pm\mid k\in \Z\backslash\{0\}\}$ of $\Hil$ consisting of unit eigenvectors of $A$ such that $M_1e_k^\pm=\mp e_k^\mp$ and the eigenvalues are given by $\lambda_{2k}=\sqrt{1+k^2}$ and $\lambda_{2k+1}=-\sqrt{1+k^2}$. This allows us to identify $\Hil$ with a complex subspace of $\Hil\otimes_\R \C$ given by the complex span of $\{z_k=(e^+_k+ie^-_k)/\sqrt{2} \mid k\in \Z\backslash\{0\}\}$. For any $h$ the space $\Hil_h$ can be defined by requiring that $||z_k||_{\Hil_h}=k^h$. First of all we check that $(A,T)$ is admissible. 
\begin{definition}[Compare definition 2.1 from \cite{fredholm}]
The pair $(A,T)$ is \emph{admissible} if there exists $h_0$ such that for $h>h_0$ we can find $c_h$ such that $|\epsilon_k|>c_hk^{-h}$. Here, $\epsilon_k\in(-\pi/T,\pi/T]$ is defined by $\epsilon_k:=\lambda_k \mod 2\pi/T$.
\end{definition}
\begin{lemma}
The pair $(A,T)$ with $A$ as in \cref{eq:A} and $T$ such that $\left(\frac{T}{2\pi}\right)^2$ has irrationality measure 2 is admissible. 
\end{lemma}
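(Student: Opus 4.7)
The plan is to reduce the admissibility condition to a quantitative irrationality estimate for $\alpha:=T/(2\pi)$, and then invoke the hypothesis on $\alpha^2$ via a squaring trick. Observing that $|\epsilon_k|$ is by definition the distance from $\lambda_k=\pm\sqrt{1+k^2}$ to the lattice $(2\pi/T)\Z$, I would first rewrite
\[|\epsilon_k| \;=\; \frac{2\pi}{T}\cdot\bigl|\alpha\sqrt{1+k^2}-n_k\bigr|,\]
where $n_k\in\Z$ denotes the integer nearest to $\alpha\sqrt{1+k^2}$ (the sign of $\lambda_k$ and the precise re-indexing $\lambda_{2k},\lambda_{2k+1}$ are immaterial for distance to a symmetric lattice). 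The crucial step is then to apply the identity $|a-b||a+b|=|a^2-b^2|$, giving
\[\bigl|\alpha\sqrt{1+k^2}-n_k\bigr| \;=\; \frac{\bigl|\alpha^2(1+k^2)-n_k^2\bigr|}{\bigl|\alpha\sqrt{1+k^2}+n_k\bigr|}.\]
This converts an approximation problem for $\alpha$ against the irrational quantity $\sqrt{1+k^2}$ into one for $\alpha^2$ against the rational number $n_k^2/(1+k^2)$, exactly matching the form of the hypothesis.

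Next I would apply the irrationality-measure-$2$ assumption on $\alpha^2$: for any $\delta>0$ there is $c_\delta>0$ with $|\alpha^2-p/q|>c_\delta\,q^{-2-\delta}$ for all $p/q\in\Q$. Applied to $p=n_k^2$ and $q=1+k^2$, this yields
\[\bigl|\alpha^2(1+k^2)-n_k^2\bigr| \;>\; \frac{c_\delta}{(1+k^2)^{1+\delta}}.\]
The denominator in the identity above is at most $2\alpha\sqrt{1+k^2}+\tfrac12 = O(|k|)$, since $|n_k-\alpha\sqrt{1+k^2}|\le\tfrac12$. Combining the two bounds yields $|\epsilon_k|\geq \tilde c_\delta\,|k|^{-3-2\delta}$ for all sufficiently large $|k|$. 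The finitely many small indices contribute nonzero $\epsilon_k$ (if $\sqrt{1+k^2}$ were a rational multiple of $2\pi/T$ then $\alpha^2$ would be rational, contradicting the hypothesis), so these are absorbed into the constant. Taking $\delta=\tfrac12$ establishes admissibility with $h_0=3$.

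The only genuine subtlety is recognising the squaring identity that makes the hypothesis applicable: the assumption concerns rational approximation of $\alpha^2$, whereas the quantity directly relevant is the lattice distance for $\alpha\sqrt{1+k^2}$. The identity transfers between the two and costs only one extra factor of $|k|$ in the exponent, coming from the cofactor $\alpha\sqrt{1+k^2}+n_k$. Once this bridge is in place, the remainder is a routine combination of the standard irrationality bound with an elementary denominator estimate, and no further input from the paper is required.
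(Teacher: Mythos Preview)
Your argument is correct and follows essentially the same route as the paper: the paper invokes its Proposition~\ref{prop:eigenvalues}, whose proof is precisely the squaring identity $|a^2-b^2|=|a-b|\,|a+b|$ applied to $\alpha^2-m^2/(1+k^2)$, together with the same cofactor bound, so your argument is just that proposition inlined (your choice to work directly with the nearest integer $n_k$ replaces the paper's two-case split). One small slip, which the paper itself shares: fixing $\delta=\tfrac12$ yields exponent $-4$ and hence only $h_0=4$; the sharper $h_0=3$ does follow, but from the fact that your bound $|\epsilon_k|\gtrsim|k|^{-3-2\delta}$ holds for \emph{every} $\delta>0$, so for each $h>3$ one chooses $\delta=(h-3)/2$.
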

\begin{proof}
Note that $|\epsilon_{2k}|=\min\{\lambda_2^+(m,k)\mid m\in \Z\}$, where $\lambda_2^+(m,k)=|\frac{2\pi}{T}|m|-\sqrt{1+k^2}|$ as in \cref{sub:asymptotics}. As was proven in \cref{prop:eigenvalues}, there is some $\wt{c}$ such that $\lambda^+_2(m,k)>\frac{\wt{c}}{(1+k^2)^2}$ for all $m,k\in \Z$. Left to prove is that there exists $c_h$ such that $\frac{\wt{c}}{(1+k^2)^2}\geq c_h(2k)^{-h}$. This can always be done for $h>h_0:=3$. For $\epsilon_{2k+1}$ the proof is similar. 
\end{proof}
Secondly, we must check the admissibility of $F_t$.
\begin{definition}[Simplified definition 2.5 of \cite{fredholm}]\label{def:reg}
For a finite-dimensional symplectic manifold $M$, the $T$-periodic non-linearity $F_t:M\times\Hil\to\R$ is called \emph{$\infty$-regularizing} if for any $h>h_0$, the map $F_t$ extends to a smooth map $M\times \Hil_{-h}\to\R$ which is smooth in the $t$-variable. Any $\infty$-regularizing non-linearity is called \emph{admissible}. 
\end{definition}
\begin{lemma}
The nonlinearity $F_t$ defined above is $\infty$-regularizing.
\end{lemma}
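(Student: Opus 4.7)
The plan is to exploit the fact that $F_t$ depends on the field $Z$ only through the smooth convolution $(\vphi*\rho)(q) = \int_{S^1}\vphi(x)\rho(q-x)\,dx$, which is a linear pairing between the first coordinate $\vphi$ of $Z$ and the smooth function $x\mapsto \rho(q-x)$. Since convolution with a $C^\infty$ kernel is an infinitely smoothing operation, this functional should extend continuously from $\Hil$ to $\Hil_{-h}$ for every $h$, and to do so smoothly in the parameter $q$.

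Concretely, I would decompose $F_t(u,Z) = \tfrac{1}{2}p^2 + V_t(q) + L(q,Z)$, where $L(q,Z) := \int_{S^1}\vphi(x)\rho(q-x)\,dx$. The first two summands are manifestly smooth on $T^*S^1$ and constant in $Z$, so only $L$ needs attention. Writing $R_q\in\Hil$ for the $\R^4$-valued function $x\mapsto(\rho(q-x),0,0,0)$, the pairing becomes $L(q,Z)=\langle Z,R_q\rangle_{L^2}$. The key step is then to show that $R_q\in\Hil_h$ for every $h$, with $\Hil_h$-norm locally bounded in $q$. Expanding $R_q$ in the eigenbasis $\{e_k^\pm\}$ of $A$: each $e_k^\pm$ is of the form $e^{ikx}v_k^\pm$ with $|v_k^\pm|$ uniformly bounded, so the coefficients of $R_q$ in this basis are linear combinations of $\hat{\rho}(k)e^{ikq}$. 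Because $\rho\in C^\infty(S^1)$, its Fourier coefficients $\hat{\rho}(k)$ decay faster than any polynomial in $|k|$, which immediately gives $\|R_q\|_{\Hil_h}<\infty$ for every $h$, uniformly for $q$ ranging over the compact $S^1$. Hence $\langle\cdot,R_q\rangle$ extends to a continuous linear functional on $\Hil_{-h}$ for each $h>h_0$.

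Smoothness in $(q,t)$ then follows by differentiating under the integral: each $q$-derivative of $R_q$ involves $\rho^{(j)}(q-\cdot)$, which is again smooth, so the map $q\mapsto R_q\in\Hil_h$ is itself smooth, and $t$-smoothness is inherited from $V_t$. The main point requiring care — and essentially the only substantive one — is the comparison between the eigenbasis expansion of $A$ and the standard Fourier basis of $L^2_0(S^1,\R^4)$, since the $\Hil_h$-norm is defined through the former while the smoothness of $\rho$ is most naturally read in the latter. Once one observes that the eigenvalues $\pm\sqrt{1+k^2}$ of $A$ grow linearly in $|k|$ and that the eigenvectors $v_k^\pm$ are merely unit vectors in $\C^4$, the two scales of regularity are comparable and the rapid decay of $\hat{\rho}(k)$ translates directly into the $\Hil_h$-bounds needed for admissibility.
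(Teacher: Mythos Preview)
Your proposal is correct and follows essentially the same approach as the paper: both arguments rewrite the nonlinearity as the $L^2$-pairing $\langle Z,R_q\rangle$ with $R_q(x)=(\rho(q-x),0,0,0)$ (the paper calls this $\underline{\rho^q}$), use the smoothness of $\rho$ to place $R_q$ in every $\Hil_h$, and then invoke the $\Hil_h$--$\Hil_{-h}$ duality via Cauchy--Schwarz. Your explicit remark comparing the $A$-eigenbasis with the standard Fourier basis is a point the paper simply asserts, so your write-up is in fact slightly more detailed on that step.
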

\begin{proof}
Assume that $Z\in\Hil_{-h}$ for some $h>h_0$ and write $Z=\sum\hat{Z}(k)z_k$. Now by definition of $\Hil_{-h}$ it follows that $(\hat{Z}(k)k^{-h})\in\ell^2(\Z,\C^2)$. Also, for $q\in S^1$, define $\underline{\rho^q}=\rho(q-\cdot)\begin{pmatrix}1&0&0&0\end{pmatrix}^T=\sum\hat{\underline{\rho^q}}(k)z_k$. Note that since $\rho(q-\cdot)$ is smooth, $||\hat{\underline{\rho^q}}(k)k^h||_{\ell^2}<+\infty$ for all $h>0$. Then
\begin{align*}
    |(\vphi*\rho)(q)|&=\left|\int_{S^1}\vphi(x)\rho(q-x)\,dx\right|\\
    &=\left|\int_{S^1}\langle Z(x),\underline{\rho^q}(x)\rangle\,dx\right|\\
    &=\left|\langle Z,\underline{\rho^q}\rangle_\Hil\right|\\
    &=\left|\langle \hat{Z}(k),\hat{\underline{\rho^q}}(k)\rangle_{\ell^2}\right|\\
    &=\left|\langle \hat{Z}(k)k^{-h},\hat{\underline{\rho^q}}(k)k^h\rangle_{\ell^2}\right|\\
    &\leq ||\hat{Z}(k)k^{-h}||_{\ell^2} ||\hat{\underline{\rho^q}}(k)k^h||_{\ell^2} <+\infty.
%
\end{align*}
Here, the last line follows from Cauchy-Schwarz. In particular, it follows from this computation that $(\vphi*\rho)(q)\in\R$ and thus $F_t$ indeed extends to $T^*S^1\times \Hil_{-h}$. It is easy to see that the resulting map is smooth and also smooth in the $t$-variable (compare with proposition 2.8 from \cite{fredholm}).
\end{proof}

The last thing to check is boundedness in the $p$-coordinate. As we are working with $T^*S^1$ for our particle which is not compact, it must be checked that this does not cause any divergence. Since our Hamiltonian is quadratic in $p$, this does not pose any problems. Conditions (F1) and (F2) from section 2 of \cite{particlefields} can be easily checked and we refer to the explanation in that article for the fact that this suffices. 

Now by combining the admissibility of $(A,T)$ and $F_t$ and the boundedness in $p$ with the fact that $T^*S^1$ has non-trivial homology, theorem 10.4 from \cite{paperoliverniek} tells us that Floer curves exist for \cref{eq:particlefieldfloer}. Even though the field equations are linear, this result is still non-trivial because of the interaction with the particle non-linearity. Note that the translation to the infinite-dimensional setting of Fabert and Lamoree is possible for \cref{eq:particlefieldfloer}, because the non-linearity has a specific form. In a subsequent article we will investigate the existence of of Floer curves in our covariant model for a larger class of polysymplectic Hamiltonians. 

\subsection*{Concluding remarks}
It should be clear from the discussion above that the analysis done in \cite{paperoliverniek,fredholm,particlefields} on the infinite-dimensional framework significantly supports the development of our covariant framework. The goal is to translate the Fredholm theory and compactness results from the aforementioned articles into the covariant setting and thus developing a covariant theory that stands on its own. These two main ingredients, combined with the work done in this article, will allow us to define a Floer theory that incorporates the symmetries in time and space, and is capable of dealing with relativistic models, such as the Lorentz model discussed above.

\bibliography{mybib}{}
\bibliographystyle{alpha}

\end{document}